\newtheorem{thm}{Theorem}[section]
\newtheorem{cor}[thm]{Corollary}
\newtheorem{lem}[thm]{Lemma}
\newtheorem{prop}[thm]{Proposition}
\theoremstyle{definition}
\newtheorem{defn}[thm]{Definition}
\newtheorem{Conjecture}{Conjecture}
\newtheorem{rem}[thm]{Remark}
\newtheorem{examp}[thm]{Example}
\newtheorem*{question}{Question}
\begin{document}
\title{A Hales--Jewett type property of finite solvable groups}

\author{Vassilis Kanellopoulos and Miltiadis Karamanlis}

\address{National Technical University of Athens, Faculty of Applied Sciences,
Department of Mathematics, Zografou Campus, 157 80, Athens, Greece}
\email{bkanel@math.ntua.gr}

\address{National Technical University of Athens, Faculty of Applied Sciences,
Department of Mathematics, Zografou Campus, 157 80, Athens, Greece}
\email{kararemilt@gmail.com}

\thanks{2010 \textit{Mathematics Subject Classification}: 05C55, 05D10.}

\keywords{Ramsey theory, Euclidean Ramsey theory, Hales--Jewett theorem, finite solvable groups, group actions}

\begin{abstract}  A   conjecture of Leader, Russell and Walters  in Euclidean Ramsey theory says that a finite set is  Ramsey if and only if it is congruent to a subset  of a set whose symmetry group acts transitively.  As they have shown the ``if" direction of their conjecture follows if all finite groups have a Hales--Jewett type property. In this paper, we show that this property is satisfied in the case of finite solvable groups. Our result can be used to recover the work of  K\v{r}\'\i\v{z} in Euclidean Ramsey theory.
\end{abstract}


\maketitle
\numberwithin{equation}{section}

\section{Introduction}\label{sec1}
\subsection{Overview}  A finite set $X$ in $\mathbb{R}^n$ is called \emph{Ramsey} if for every $r\in\mathbb{N}$ there exists a positive integer  $N=N(X,r)$ such that for every $r$-coloring of $\mathbb{R}^N$ there exists  a monochromatic subset of $\mathbb{R}^N$ which is congruent to $X$. The concept of Ramsey set was originally introduced and studied   by Erd\H{o}s, Graham, Montgomery, Rothschild,  Spencer, and Straus in  \cite{{EGMRSS73}, EGMRSS75a, EGMRSS75b}. Frankl and R\"{o}dl in \cite{FR90} proved  that any $n$-dimensional simplex  is Ramsey (see also \cite{FR04}  and  \cite{MR95}). Another important result is that of  K\v{r}\'\i\v{z} in \cite{Kr91} saying that a finite subset of $\mathbb{R}^n$ is Ramsey if  its symmetry group acts transitively and  has  a solvable subgroup  with  at most two orbits. As a consequence, he showed that all  regular polygons  and all regular polyhedra   are Ramsey. For a general survey  of Euclidean Ramsey theory see \cite{Gr18}.

A central problem in Euclidean Ramsey theory  is to determine which sets are Ramsey. In  \cite{EGMRSS73}  it  was shown that every Ramsey set is  \emph{spherical}, that is  it lies on the surface of some sphere. A  well-known  conjecture of Graham  \cite{Gr94} says that the Ramsey sets are exactly the  spherical sets.  In \cite{LRW12}, Leader, Russell and Walters   proposed an alternative conjecture, stating  that a finite set is Ramsey if and only if it is \emph{subtransitive} in the sense that  it is congruent to a subset of a finite  set whose symmetry group acts transitively.  Their conjecture is  a genuine refinement of that of  Graham's, since although every subtransitive set is spherical,  almost all four point subsets of the circle are not subtransitive (see \cite{LRW11}). Concerning the one  direction of their conjecture, saying that  a set is Ramsey if it is subtransitive,  they showed that it can be reduced to  a list of equivalent  conjectures  which are free from any geometric notion and hopefully more manageable to prove. One of these  conjectures \cite[Conjecture C]{LRW12} states that every finite group satisfies a  combinatorial statement  which resembles the Hales--Jewett theorem \cite{HJ63}.  In this paper,  we show that in the case of finite solvable groups   a stronger statement  is  satisfied. The motivation to investigate this class of groups comes from  the above mentioned work of K\v{r}\'\i\v{z}  \cite{Kr91}. To state the conjecture and
our results we will need   an algebraic notion of variable words  originally introduced by   Graham and Rothschild in \cite{GR71}, which we develop in the next subsection.

\subsection{Variable words and groups.}

First let us state some general notation. In the following  by  $\mathbb{N}$ we denote the set of all positive integers and for every $n\in\mathbb{N}$, let $[n]=\{1,\ldots,n\}$.  Also for a finite set $X$,  $|X|$ stands for  its cardinality. An \emph{action} of a group $G$ on a nonempty set $X$ is a map from $G\times X$ to $X$, denoted by $(g,x)\to gx$ such that   $ex=x$, for all $x\in X$ (where  $e$ is   the identity element of $G$) and  $h(gx)=(hg)x$, for all $x\in X$ and all $h,g\in G$. For any  $x\in X$ the set $G x=\{gx:g\in G\}$ is  an  \emph{orbit of $G$ in $X$}.   We say that  $G$ \emph{acts transitively} (or it is \emph{transitive}) if $G$ has only one orbit.

Let  $G$ be a finite group acting  on a finite set $X$. We view  $X$ as an  \emph{alphabet} and its elements as \emph{letters} and  we call the finite sequences with values in $X$, \emph{constant words}.  We also fix a set $\{v_g:g\in G\}$ of distinct \emph{variables} indexed by the set $G$,  such that $v_g\notin X$ for every $g\in G$. For every nonempty subset $H$ of $G$,  by the term $H$-\emph{variable word} \emph{over} $X$ (of \emph{length} $N$), we mean a finite sequence $W=(w_i)_{i=1}^N$ with $w_i\in X\cup\{v_h: h\in H\}$ for all $i\in[N]$, such that the set $F_h=\{i\in[N]:w_i=v_h\}$ is non empty for all $h\in H$. The positive integer  $d=\sum_{h\in H}\left|F_h\right|$ will be  called the \emph{degree} of  $W$. Given an $H$-variable word $W$ over $X$ and $x\in X$, by $W(x)$ we denote the constant word of the same length obtained by leaving the letters of $W$ unchanged and replacing each variable $v_h$ by  $hx$, where $hx$ is the result of the action of $h$ on $x$, i.e.   if $W=(w_i)_{i=1}^N$ then $W(x)=\left(w_i(x)\right)_{i=1}^N\in X^N$, where
\begin{equation}\label{EQT1.1}
	w_i(x)=	\begin{cases} 	
							w_i	& \text{if }\, w_i\in X,\\
							hx 	& \text{if }\, w_i=v_h \text{ for some } h\in H.
			\end{cases}
\end{equation}

\begin{examp}\label{EX1.1}
Let $X=[3]$, $G=S_3$ be the symmetric group of $[3]$ and $H=\{e,\tau,\tau^2\}$ be the subgroup of $S_3$ generated by the cycle $\tau=(1\ 2\ 3)$. The sequence  $W=(v_e, 1, 2, v_{\tau^2}, v_{\tau})$ is a  $H$-variable word over $X$ of length $N=5$ and degree  $d=3$. Moreover,  under the natural action of $S_3$ on  $[3]$ we have $W(1)=(1,1,2,3,2)$, $W(2)=(2,1,2,1,3)$ and $W(3)=(3, 1, 2, 2, 1)$.
\end{examp}

\begin{rem} \label{RM1.2}  
In connection with Euclidean Ramsey theory, it is worth mentioning the following geometric interpretation of the above notion of variable words. Suppose that $X$ is a finite subset of $\mathbb{R}^n$ and $G$ is the symmetry group of $X$ i.e. the set of all distance preserving maps from $X$ to $X$, with group operation the composition of functions. Let $\emptyset\neq H\subseteq G$ and $W$ be a $H$-variable word over $X$ of length $N$ and degree $d$. Then  by \eqref{EQT1.1}, it is easy to see that the map  $x\to W(x)$ is a $d^{1/2}$-\emph{dilation of} $X$ into $X^N$, that is for every  $x,x'\in X$, $\|W(x)-W(x')\|^2=d\|x-x'\|^2$, where the norm in the left (resp. right) hand-side  is the usual Euclidean norm in $\mathbb{R}^{nN}$ (resp. $\mathbb{R}^{n}$).
\end{rem}

In the following, if  $G=X$, whenever  we omit to mention any particular action of $G$ on itself, we will always mean  that the action  is the natural  operation of $G$. So in this case,  if $W$ is a $H$-variable word over $G$, then for every $g\in G$, $W(g)$ is  the constant word obtained by substituting in $W$ each variable $v_h$ by the ordinary product $hg$ of $h$ and $g$ in $G$.

For every $r\in\mathbb{N}$ and for a nonempty  set $Y$,  an $r$-\emph{coloring} of $Y$ is a map from  $Y$ to $[r]$. A subset $Z$ of $Y$ is called \emph{monochromatic} if the coloring is  constant  on $Z$.

Under the above notation, the  conjecture of Leader, Russell and Walters \cite[Conjecture C]{LRW12}  is  restated as follows.

\begin{Conjecture}\label{Conj1} \cite{LRW12}
\emph{Let $G$ be a finite  group and  $r\in\mathbb{N}$.  Then there exist positive integers\,  $d$ and $N$ such that for every $r$-coloring of $G^N$ there exist a nonempty  $H\subseteq G$ and a $H$-variable word $W$ over $G$ of length $N$ and degree $d$,  such that the set $\{W(g): g\in G\}$ is monochromatic}.
\end{Conjecture}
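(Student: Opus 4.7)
The plan is to attempt Conjecture~\ref{Conj1} by induction on $|G|$, with the classical Hales--Jewett theorem serving as the engine at each level. The base case is the trivial group, where any choice of $N$ and $d$ suffices. For the inductive step one distinguishes two situations: either $G$ admits a proper nontrivial normal subgroup $K$, or $G$ is simple.

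In the reducible case, the strategy is to apply the inductive hypothesis first to the quotient $G/K$ and then to $K$ itself. Fix a section $\sigma\colon G/K\to G$ so that every $g\in G$ factors uniquely as $g=\sigma(\bar g)\,n$ with $n\in K$ and $\bar g\in G/K$. Given an $r$-coloring $\chi$ of $G^M$ for sufficiently large $M$, view each coordinate as a pair in $(G/K)\times K$ and apply the inductive hypothesis for $G/K$ to produce an $\bar H$-variable word $\bar W$ over $G/K$ of controlled degree whose orbit is monochromatic with respect to the induced quotient coloring. Then, restricting attention to the coordinates in which $\bar W$ places variables, apply the inductive hypothesis for $K$ with a boosted number of colors that encodes the remaining freedom in the $K$-components; this refines $\bar W$ into an $H$-variable word $W$ over $G$ of the prescribed degree $d$ with $\{W(g):g\in G\}$ monochromatic. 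The delicate point is that the same degree $d$ must be achieved uniformly across coset behaviors; a Graham--Rothschild style parameter-word argument supplies the correct bookkeeping, and the interplay between left multiplication by $\sigma(\bar g)$ and the $K$-action is what forces the ordering ``quotient first, kernel second.''

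The main obstacle is the remaining base case of the outer induction, namely when $G$ is simple. For $G\cong\mathbb{Z}/p\mathbb{Z}$ a direct application of Hales--Jewett suffices: the action of $G$ on itself is by translation and the variable structure tracks naturally with combinatorial lines, while prescribing the degree $d$ is handled by a pigeonhole over possible degrees followed by a further iteration of Hales--Jewett. For a non-abelian finite simple group, however, neither the quotient step nor any smaller normal subgroup is available, and a dedicated Ramsey-theoretic argument is required---most plausibly one that exploits the orbit decomposition of $G$ acting on itself by left multiplication together with its conjugacy class structure---to manufacture a variable word of the prescribed degree whose $G$-orbit is monochromatic. This is precisely where the solvable hypothesis enters the present paper: the derived series of a solvable group provides a composition chain with abelian quotients, so the inductive machinery above applies at every level and the non-abelian simple obstruction never appears. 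Overcoming that obstruction for general $G$ is the principal difficulty in Conjecture~\ref{Conj1}, and it is the step I would expect to dominate the technical work of any attempt at the full conjecture.
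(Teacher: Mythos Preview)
The statement you are addressing is Conjecture~\ref{Conj1}, which is \emph{open}; the paper does not prove it. What the paper proves is the special case of finite solvable $G$ (Theorem~\ref{T1}), and indeed your own proposal recognises this: you explicitly leave the non-abelian simple base case unresolved. So what you have written is not a proof of Conjecture~\ref{Conj1} but a reduction of it to the case of finite simple groups, together with the (correct) observation that for solvable $G$ this reduction already suffices.

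That reduction is, in outline, exactly the skeleton of the paper's argument: induction along a normal series, with the cyclic quotients handled separately and an extension lemma gluing the pieces. Two points of comparison are worth noting. First, for $G\cong\mathbb{Z}/p\mathbb{Z}$ the paper does \emph{not} use a bare Hales--Jewett argument; it uses K\v{r}\'\i\v{z}'s combinatorial device (Lemmas~\ref{LM5.1} and~\ref{LM5.2}) to obtain a uniform $G$-variable word of the specific degree $p^{p-1}$, independent of $r$. Your ``Hales--Jewett plus pigeonhole over degrees'' sketch, if made precise, would at best yield some $d$ depending on $r$ and would produce an $\{e\}$-variable word rather than a uniform $G$-variable word; the latter distinction matters because the extension step needs the full $H$-variable structure to propagate. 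Second, in the extension step the paper's order is the reverse of yours: Lemma~\ref{LM7.1} first applies Shelah's lemma (Lemma~\ref{LM3.1}) to the \emph{kernel} to produce a long sequence of $H$-variable words, and only then applies the \emph{quotient} hypothesis to the induced coloring on $K^{n}$. This ordering is what makes the resulting word a genuine uniform $G$-variable word of the stated degree $d_H^{|K|}d_K$; your ``quotient first, kernel second'' sketch with an unspecified Graham--Rothschild bookkeeping would need substantial work to control the degree and the variable set simultaneously.

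In short: the genuine gap is the one you name yourself---non-abelian finite simple $G$---and nothing in your proposal addresses it. For the solvable case your plan is morally the paper's plan, but the paper's execution is sharper (degree independent of $r$, uniform $G$-variable words) and relies on the specific tools of Propositions~\ref{PRP2.2}, \ref{PRP2.4}, and~\ref{PRP2.6} rather than generic Hales--Jewett.
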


Assuming that Conjecture \ref{Conj1} holds and taking into account  Remark \ref{RM1.2}, it is not difficult  to show  that any  finite subset $X$  of $\mathbb{R}^n$ with  a transitive symmetry group is Ramsey (for details see  \cite[Proposition 2.1]{LRW12} or  Corollary \ref{CRL1.7} below).

In this paper we will exclusively  deal with $H$-variable words over $X$ in which for all $h\in H$ the variable $v_h$ appears the same number of times. These variable words will be called \emph{uniform}.  This notion also appears  in \cite{LRW12} in a similar  setting.  Actually, following  the circle of the equivalent conjectures in \cite{LRW12}, it can be deduced  that Conjecture \ref{Conj1} can be equivalently restated for $H=G$ and $W$ a uniform $G$-variable word over $G$.

\subsection{The main results of the paper.} 
Recall that  a group $G$ is called \emph{solvable}  if  it has  a subnormal  series $\{e\}=G_0\vartriangleleft  G_1\vartriangleleft\dots \vartriangleleft G_n=G$ such that   $G_{i}/G_{i-1}$ is   abelian for all $i\in [n]$. If $G$ is finite then $G$ is solvable if and only if it has a subnormal series such  that  all factors  are cyclic. Every abelian group is  solvable and by  the famous theorem  of Feit and Thomson \cite{FT63}, every  finite group of odd order is solvable. On the other hand, the symmetric group $S_n$ is solvable only for $n\leqslant 4$.

\begin{defn}\label{DFN1.4} 
Let  $\{e\}=G_0 \vartriangleleft   G_1\vartriangleleft  \dots \vartriangleleft  G_n=G$ be a  subnormal series with cyclic factors of a finite solvable group $G$ and  let  $p_i$ be  the order of  the factor group  $G_{i}/G_{i-1}$, for every $i\in [n]$. The number
\begin{equation}\label{EQT1.3}
\begin{split}  \prod_{i=1}^n p_i^{(p_i-1)\prod_{j>i} p_j}=p_1^{(p_1-1)\prod_{j=2}^n p_j} p_2^{(p_2-1)\prod_{j=3}^n p_j}\dots  p_n^{p_n-1}
\end{split}
\end{equation}
will be called a \emph{HJ-degree} of $G$.
\end{defn}

For example, the series   $\{e\}\vartriangleleft  A_3 \vartriangleleft S_3$ gives that the number  $3^4 \cdot 2$  is  a HJ-degree of  $S_3$. Also,  by the series  $\{e\}\vartriangleleft \{e, (1 \ 2 )(3\ 4)\} \vartriangleleft V_4 \vartriangleleft A_4 \vartriangleleft S_4$, where $V_4$  is the Klein 4-group, we get that the number $2^{19}\cdot 3^4$   is a  HJ--degree of  $S_4$. The above HJ-degrees of $S_3$ and $S_4$ are unique since the preceding series are the only  subnormal series with cyclic factors that  these groups have. In general, a solvable group may have more than one HJ-degrees. Indeed, denoting by  $C_n$  a cyclic group of order $n$,  the  series $\{e\}\vartriangleleft C_6$, $\{e\}\vartriangleleft C_2 \vartriangleleft C_6$, $\{e\}\vartriangleleft C_3 \vartriangleleft C_6$ give that the numbers $6^5$, $2^3\cdot 3^2$ and $3^4\cdot 2$ are HJ-degrees of $C_6$. The number  $n^{n-1}$  is the largest HJ-degree of $C_n$. In general, using   \eqref{EQT1.3}, it can be easily shown that refinements of a subnormal series lead to smaller HJ-degrees.

Our first main  result says  that finite solvable groups satisfy a stronger form  of  Conjecture \ref{Conj1} in the sense that  the degree $d$ of the resulting variable word may be assumed to be a HJ-degree of $G$ and hence it is independent of the number  of colors.

\begin{thm}\label{T1}
Let $G$ be a  finite solvable group,  $d$ be a HJ-degree of $G$ and  $r\in\mathbb{N}$. Then  there exists a positive integer $N$ such that  for every   $r$-coloring of $G^N$  there exists a uniform $G$-variable word $W$ over $G$ of length $N$ and degree $d$  such that the set $\left\{W(g):g\in G\right\}$ is monochromatic.
\end{thm}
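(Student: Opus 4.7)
The plan is to prove Theorem~\ref{T1} by induction on the length $n$ of the subnormal series $\{e\}=G_0 \vartriangleleft G_1 \vartriangleleft \cdots \vartriangleleft G_n = G$, with the base case $n=0$ (trivial group, $d=1$) being immediate. For the inductive step I set $H:=G_{n-1}$ and $p:=p_n$, so that $G/H\cong C_p$ is cyclic of prime order $p$, and observe the recursive identity
\[
d_G \;=\; d_H^{\,p}\cdot p^{\,p-1},
\]
which follows from Definition~\ref{DFN1.4} by a short calculation (here $d_H$ denotes the HJ-degree of $H$ for the induced series). The $p$-th power $d_H^{\,p}$ in this recursion is the decisive structural feature, indicating that the inductive hypothesis on $H$ must be applied $p$ times with the degrees combining multiplicatively rather than additively.

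Fix $a\in G$ whose coset generates $G/H$, so that each $g\in G$ has a unique decomposition $g=a^kh$ with $k\in\{0,\ldots,p-1\}$ and $h\in H$, and $G=\bigsqcup_{k=0}^{p-1} a^kH$. Given an $r$-colouring of $G^N$ for $N$ large, I would produce the desired uniform $G$-variable word $W$ by combining two pieces. In the first stage I apply the induction hypothesis on $H$ iteratively $p$ times, obtaining at each iteration a uniform $H$-variable word of degree $d_H$ whose variables $v_h$ are relabelled as $v_{a^kh}$ so as to populate a distinct coset $a^kH$. The iterations are combined multiplicatively via nested substitution: each variable position of an outer $H$-word accommodates a full copy of the next iteration's $H$-word, with variables relabelled accordingly, so that the $p$ contributions of degree $d_H$ combine to a total degree of $d_H^{\,p}$; this is controlled by the standard Ramsey tower trick, in which the colouring at each level is inflated to remember the outcomes at deeper levels. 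In the second stage I apply the Hales--Jewett theorem (equivalently, the $n=1$ case of the theorem, deducible from the Graham--Rothschild theorem) to the cyclic quotient $C_p$ to obtain a uniform $C_p$-variable-word block of degree $p^{\,p-1}$; this step coordinates the $p$ coset shifts and upgrades the monochromaticity from the $H$-action (which is all that the first stage provides) to full monochromaticity under the $G$-action. The two stages combine into a uniform $G$-variable word of length $N$ and degree $d_H^{\,p}\cdot p^{\,p-1}=d_G$.

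The hardest point will be ensuring that the nested combination in the first stage and the Hales--Jewett step in the second stage fit together into a single uniform $G$-variable word with the correct degree and multiplicity. A naive concatenation of the $p$ $H$-variable words would yield degree only $p\cdot d_H$ rather than $d_H^{\,p}$, so the iterations must be assembled so that their degrees genuinely multiply---this is the key combinatorial subtlety. Simultaneously I must verify that (a) each variable $v_g$ for $g\in G$ appears exactly $d_G/|G|$ times (uniformity), (b) the tower of enriched colourings used across the $p$ iterations affects only the length $N$ and not the degree $d_G$, and (c) monochromaticity holds on the full orbit $\{W(g):g\in G\}$ rather than only on some $H$-orbit within it.
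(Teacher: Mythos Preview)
Your inductive framework and the recursion $d_G = d_H^{\,p}\cdot p^{\,p-1}$ are exactly the skeleton of the paper's proof (Propositions~\ref{PRP2.4} and~\ref{PRP2.6}), and your description of the ``nested substitution'' producing the factor $d_H^{\,p}$ is, in outline, what Lemma~\ref{LM6.2} does. So structurally you and the paper agree.

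The genuine gap is in your second stage. You assert that the cyclic case---a uniform $C_p$-variable word of degree \emph{exactly} $p^{p-1}$, with this degree independent of $r$---is ``deducible from the Graham--Rothschild theorem''. It is not. The classical Hales--Jewett and Graham--Rothschild theorems do produce a monochromatic $G$-variable word, but the degree of that word is not fixed in advance; it depends on the colouring and in particular grows with $r$. The whole point of Theorem~\ref{T1} (and what distinguishes it from Conjecture~\ref{Conj1}) is that the degree $d$ is a structural invariant of $G$, independent of $r$. Obtaining the fixed degree $p^{p-1}$ for $C_p$ is precisely the content of Proposition~\ref{PRP2.2}, and the paper devotes all of Section~\ref{Sec5} to it, using a combinatorial argument of K\v{r}\'\i\v{z}: one first proves a small Ramsey-type lemma (Lemma~\ref{LM5.1}) about $(p-1)$-subsets of $[n]$, then builds up by a second induction on $k\in\{1,\dots,p-1\}$ (Lemma~\ref{LM5.2}), at each step enlarging the monochromatic set $\{W(\tau^j):0\le j\le k\}$ while multiplying the degree by $p$. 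This is the missing idea in your proposal, and without it the argument does not go through.

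A minor point: you write that $G/H\cong C_p$ has \emph{prime} order $p$, but the definition of HJ-degree allows arbitrary cyclic factors, and the theorem is stated for any HJ-degree. Your induction as written would only cover composition series, not all subnormal series with cyclic factors; the argument works the same way for non-prime $p$, so you should simply drop the word ``prime''.
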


Our second main  result concerns actions of finite solvable groups.

\begin{thm} \label{T2} Let $G$ be a  finite solvable group acting on a finite  set $X$,  $d$ be a HJ-degree of $G$ and  $r\in\mathbb{N}$. Then  there exists a positive integer $N$ such that for  every  $r$-coloring of $X^N$  there exists a uniform $G$-variable word $W$ over $X$  of  length $N$ and degree $d^p$, where $p$ is the number of the orbits  of $G$ in $X$, such that for every  $x\in X$, the set $\left\{W(gx):g\in G\right\}$ is monochromatic.
\end{thm}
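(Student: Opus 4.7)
The plan is to prove Theorem \ref{T2} by induction on the number $p$ of $G$-orbits in $X$, using Theorem \ref{T1} as the base case.

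For the base case $p=1$, the action of $G$ on $X$ is transitive. Fix $x_0\in X$ and pull back the coloring $\chi\colon X^N\to[r]$ along the $G$-equivariant surjection $\rho\colon G\to X$, $g\mapsto g x_0$, obtaining $\tilde\chi\colon G^N\to[r]$, $\tilde\chi(g_1,\dots,g_N)=\chi(g_1 x_0,\dots,g_N x_0)$. Theorem \ref{T1} produces, for $N$ sufficiently large, a uniform $G$-variable word $W^{*}$ over $G$ of length $N$ and degree $d$ with $\{W^{*}(g):g\in G\}$ $\tilde\chi$-monochromatic. Replacing each constant letter $a\in G$ of $W^{*}$ by $a x_0\in X$ yields a uniform $G$-variable word $W$ over $X$ of length $N$ and degree $d=d^{1}$; since $W(gx_0)$ equals the coordinatewise image of $W^{*}(g)$ under $\rho$, the set $\{W(y):y\in X\}$ is $\chi$-monochromatic.

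For the inductive step, the main tool is a \emph{tensor product} on uniform $G$-variable words over $X$: given $V_1$ of length $N_1$ and degree $d_1$, and $V_2$ of length $N_2$ and degree $d_2$, define $V_1\otimes V_2$ of length $N_1 N_2$ and degree $d_1 d_2$ by
\[
(V_1\otimes V_2)[(i,j)] = \begin{cases} V_1[i] & \text{if } V_1[i]\in X, \\ g\cdot V_2[j] & \text{if } V_1[i]=v_g, \end{cases}
\]
with the conventions $g\cdot b=gb$ for $b\in X$ and $g\cdot v_h=v_{gh}$. A direct check shows that $V_1\otimes V_2$ is uniform and that $(V_1\otimes V_2)(y)[(i,j)] = V_1(V_2(y)[j])[i]$, so
\[
\chi((V_1\otimes V_2)(y)) = (\chi\circ\Psi_{V_1})(V_2(y)),
\]
where $\Psi_{V_1}\colon X^{N_2}\to X^{N_1 N_2}$ applies $V_1$ blockwise. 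The strategy is to pick $V_1$ of length $N_1$ and degree $d^{p-1}$ so that the induced coloring $\chi\circ\Psi_{V_1}$ of $X^{N_2}$ depends on its argument only through its orbit pattern, i.e.\ is invariant under the diagonal $G^{N_2}$-action. Any uniform $G$-variable word $V_2$ over $X$ of length $N_2$ and degree $d$ (e.g.\ a length-$d$ all-variable word) then gives $W:=V_1\otimes V_2$ of length $N_1 N_2$ and degree $d^{p-1}\cdot d=d^p$, and $\chi\circ\Psi_{V_1}\circ V_2\colon X\to[r]$ is constant on every orbit of $X$ because the orbit pattern of $V_2(y)$ depends on $y$ only through the orbit of $y$, which is exactly the conclusion of Theorem \ref{T2}.

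The main obstacle is the construction of such a $V_1$. The plan is to apply the inductive hypothesis — or, equivalently, to iterate Theorem \ref{T1} $p-1$ times — to a \emph{super-coloring} of $X^{N_1}$ whose large but finite color set encodes the orbit-pattern-invariance requirement: for each outer frame $\vec w\in X^{N_1(N_2-1)}$ and each coordinate $j\in[N_2]$, record the map $u\mapsto \chi(V_{\vec w,j}(u))$, where $V_{\vec w,j}(u)\in X^{N_1 N_2}$ is obtained by inserting $u$ into slot $j$ of the frame. An orbit-wise monochromatic $V_1$ under this super-coloring is exactly one for which $\chi\circ\Psi_{V_1}$ is invariant under orbit-preserving changes in each coordinate. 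The delicate bookkeeping here — in particular, reconciling the super-coloring (which lives over the $p$-orbit action on $X$) with an inductive hypothesis phrased for actions with fewer orbits, either by strengthening the statement or by peeling off one coordinate at a time through $p-1$ successive applications of Theorem \ref{T1} with suitably enlarged color sets — is where I expect the technical heart of the proof to lie.
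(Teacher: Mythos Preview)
Your base case ($p=1$) is correct and coincides exactly with the paper's Lemma~6.1 (specialized to $H=G$): pull the coloring back along $g\mapsto gx_0$ and invoke Theorem~\ref{T1}. Your tensor construction is also the right kind of composition; it is the single-word analogue of the paper's formula~\eqref{EQT5.9}.

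The inductive step, however, has a real gap --- precisely where you suspected. You ask for a \emph{single} word $V_1$ of degree $d^{p-1}$, applied blockwise, such that $\chi\circ\Psi_{V_1}$ is invariant under the full coordinatewise $G^{N_2}$-action (all $p$ orbits). Two things go wrong. First, full $p$-orbit invariance is essentially Theorem~\ref{T2} itself fed into a product, so it cannot be bought at cost $d^{p-1}$; once $V_1$ really only collapses $p-1$ of the orbits, $V_2$ can no longer be ``any'' uniform word --- it must be chosen via the $p=1$ argument for the remaining orbit. Second, and more seriously, the super-coloring route is circular as written: its number of colors is of order $r^{|X|^{N_1(N_2-1)}\cdot N_2}$, which depends on the very $N_1$ you are trying to choose, and the inductive hypothesis (phrased for $G$-sets with $p-1$ orbits, not for $X$ with a coarser equivalence) gives no fixed point here.

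The paper breaks this circularity by two devices. It keeps $X$ fixed throughout and inducts instead on an equivalence relation $E_k$ on $X$ that collapses only the first $k$ orbits, proving the ``$(E_k,d^k)$-UHJP'' for $(G,X)$ (Lemma~6.2). At the step $k\to k+1$ it first applies your base case to the single orbit $Hy_{k+1}$ (Lemma~6.1, cost $d$) and then --- crucially --- feeds that one-orbit property into a Shelah-type iteration (Lemma~3.1), which manufactures invariance in orbit $k{+}1$ one coordinate of $X^n$ at a time, each step enlarging the color set only by quantities already fixed. This produces a \emph{sequence} $(W_i)_{i=1}^n$ of degree-$d$ words rather than one repeated $V_1$, and that is exactly what dissolves your circular dependence of $N_1$ on itself. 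The inductive $(E_k,d^k)$-property is then applied to the honest $r$-coloring $c'(x_1,\dots,x_n)=c\big(\prod_i W_i(x_i)\big)$ of $X^n$, yielding $W'$ of degree $d^k$, and the composition $W=\prod_{i\in F}W_i(x_i)\times\prod_{h}\prod_{i\in F_h}W_i^{\,h}$ (your tensor, but with varying inner words) has degree $d^{k+1}$. In short, your ``peel off one orbit at a time via Theorem~\ref{T1}'' alternative is the correct one, but it must be executed through a Shelah sequence rather than a blockwise repetition of a single $V_1$.
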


It is clear that Theorem \ref{T2} includes  Theorem \ref{T1} as a special case. However, as we will see  the two theorems are equivalent. To illustrate the connection with Euclidean Ramsey theory, let us present  the following  consequence which is a refined form of  \cite[Theorem 4.3]{Kr91}.

\begin{cor}\label{CRL1.7} 
Let $X$ be a finite non-empty  subset of $\mathbb{R}^n$ and   $G$ be a  solvable group of isometries of $X$. Also let  $d$ be a HJ-degree of $G$ and let  $\lambda=d^{-p/2}$,  where  $p$  is the number of the orbits of $G$ in  $X$. Then for every  $r\in\mathbb{N}$ there exists a positive integer $N$  such that  for  every $r$-coloring of $\lambda X^N$ there is an isometric embedding $f:X\to \lambda X^N$ such that  for every $x\in X$,  the set $\{f(gx): g\in G\}$ is monochromatic.
\end{cor}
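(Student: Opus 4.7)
The plan is to derive Corollary \ref{CRL1.7} as a direct geometric reformulation of Theorem \ref{T2}, combining it with the dilation interpretation from Remark \ref{RM1.2} and a single rescaling by $\lambda$. No further combinatorial machinery is needed; the only technical points are to check that the rescaling converts the natural dilation into an isometry and that colorings transfer correctly under the scaling bijection.

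First I would fix $r\in\mathbb{N}$ and let $N$ be the integer supplied by Theorem \ref{T2} applied to the action of the finite solvable group $G$ on $X$, to the HJ-degree $d$, and to $r$ colors. Given an arbitrary $r$-coloring $c:\lambda X^N\to [r]$, I would pull it back to an $r$-coloring $\tilde c$ of $X^N$ via $\tilde c(y)=c(\lambda y)$; since $y\mapsto \lambda y$ is a bijection from $X^N$ onto $\lambda X^N$, this is a genuine $r$-coloring.

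Next I would apply Theorem \ref{T2} to $\tilde c$ to obtain a uniform $G$-variable word $W$ over $X$ of length $N$ and degree $d^p$ such that the set $\{W(gx):g\in G\}$ is $\tilde c$-monochromatic for every $x\in X$. Setting $f(x)=\lambda W(x)$, the dilation identity from Remark \ref{RM1.2} gives
\[
\|W(x)-W(x')\|^2 \;=\; d^p\|x-x'\|^2,
\]
so that $\|f(x)-f(x')\|=\lambda d^{p/2}\|x-x'\|=\|x-x'\|$, using $\lambda=d^{-p/2}$. Hence $f$ is an isometric embedding of $X$ into $\lambda X^N$. Monochromaticity of $\{f(gx):g\in G\}=\lambda\{W(gx):g\in G\}$ under $c$ then follows immediately from the monochromaticity of $\{W(gx):g\in G\}$ under $\tilde c$, by the definition of $\tilde c$.

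The ``main obstacle'' here is really conceptual rather than technical: one has to notice that the HJ-degree $d^p$ appearing in Theorem \ref{T2} was chosen precisely so that rescaling by $d^{-p/2}$ turns the canonical $d^{p/2}$-dilation $x\mapsto W(x)$ into a genuine isometry. All the combinatorial difficulty is absorbed into Theorem \ref{T2}, and the corollary is a clean translation.
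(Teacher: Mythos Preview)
Your proof is correct and follows essentially the same argument as the paper's own proof: both pull back the coloring of $\lambda X^N$ to $X^N$ via the scaling map, apply Theorem \ref{T2} together with the dilation interpretation of Remark \ref{RM1.2} to obtain a $d^{p/2}$-dilation with the required monochromaticity, and then rescale by $\lambda$ to turn it into an isometry. The only differences are notational (the paper writes the dilation as $\phi$ and the pulled-back coloring as $c_\lambda$), and you spell out the isometry computation a bit more explicitly.
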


\begin{proof} 
Fix $r\in\mathbb{N}$. By Theorem \ref{T2} and  Remark \ref{RM1.2} (for $G$ and $d^p$ in place of $H$ and $d$), there exists a positive integer  $N$ such that  for any  $r$-coloring of $X^N$ there exists  a $d^{p/2}$-dilation  $\phi: X\to X^N$ with the property that  for every $x\in X$ the set  $\{\phi(g x): g\in G \}$ is monochromatic. Now let  $c$ be an $r$-coloring of $\lambda X^N$ and let  $c_\lambda: X^N \to [r]$ defined  by $c_\lambda(\bm{x})=c(\lambda \bm{x})$, for all $\bm{x}\in X^N$.  If  $\phi$ is the  $d^{p/2}$-dilation of $X$ into $X^N$ corresponding to $c_\lambda$, then it is easy to check that the map  $f:X\to \lambda X^N$ defined by $f(x)=\lambda \phi(x)$ is as  desired.
\end{proof}

\subsection{Organization  of the paper}  
In Section \ref{Sec2} we  define a Hales--Jewett type property  for finite groups as well as a generalization  of this property for  actions  on finite sets. Under these definitions our main results are  reduced to  three propositions.  A basic tool that we will use in  their  proofs is a variant of a well-known  lemma due to  Shelah  \cite{Sh88} which is presented  in Section \ref{Sec3}. In Section \ref{Sec4} we introduce  some  extra  notation   and the proofs are  completed in  Sections \ref{Sec5}-\ref{Sec7}. We close the paper in Section \ref{Sec8} by  stating some notes and remarks.

\section{The basic steps  of the proof of the main results.} \label{Sec2}
In the following, given  a finite group $G$ acting on a finite set $X$ and a nonempty subset $H$ of $G$, by $\mathrm{V}_{\mathrm{un}}^d(H;X)$ we will denote the set of all uniform $H$-variable words over $X$ of degree $d$. In particular, if $G=H=X$, by $\mathrm{V}_{\mathrm{un}}^d(G;G)$  we denote  the set of all uniform $G$-variable words over $G$ of degree $d$.

By  isolating  the  property of solvable groups arising from   Theorem \ref{T1} we formulate the following definition.

\begin{defn}\label{D1}
Let $G$ be a finite group and $d\in\mathbb{N}$. We will say that $G$ \emph{has the $d$-uniform Hales--Jewett property} (in short, $d$-UHJP), if for every $r\in\mathbb{N}$ there exists a positive integer $N=N(G,d,r)$ such that for every $r$-coloring  of\, $G^N$ there exists a variable word $W\in \mathrm{V}_{\mathrm{un}}^d(G;G)$ of length $N$ such that the set $\{W(g): g\in G\}$ is monochromatic.
\end{defn}

In view of Definition \ref{D1}, Theorem \ref{T1} states that if $G$ is a finite solvable group and $d$ is a HJ-degree of $G$ then $G$ has the $d$-UHJP. The trivial group $\{e\}$ has the $1$-UHJP (simply consider the variable word $W=v_e$).  Moreover,  it is easy  to see, using for instance the correspondence $g\to(g,\ldots,g)\in G^k$, that if $G$ has the $d$-UHJP then  it also has the $kd$-UHJP, for all $k\in\mathbb{N}$. The first step towards the proof of Theorem \ref{T1} is the following.

\begin{prop}\label{PRP2.2} 
If $p$ is any  positive integer and $G$ is a cyclic group of order $p$ then $G$ has the $p^{p-1}$-UHJP.
\end{prop}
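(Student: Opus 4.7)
The plan is to reformulate the $p^{p-1}$-UHJP as a statement about monochromatic orbits of a $G$-shift in $G^N$, then to produce the required orbit via a Shelah-style strengthening of Graham--Rothschild and a diagonal extraction.

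First I would recast what has to be found. A uniform $G$-variable word $W$ over $G$ of length $N$ and degree $p^{p-1}$ is the same data as: a subset $V\subseteq[N]$ with $|V|=p^{p-1}$, a partition $V=I_0\sqcup I_1\sqcup\cdots\sqcup I_{p-1}$ with $|I_h|=p^{p-2}$ for each $h$, and a background word $b$ on $[N]\setminus V$. Writing $u\in G^N$ for the word with $u|_{I_h}\equiv h$ and $u|_{[N]\setminus V}=b$, and regarding $G=\mathbb{Z}/p\mathbb{Z}$ additively, one has
\[
\{W(g):g\in G\}=\{u+g\cdot\mathbf{1}_V:g\in G\},
\]
where $\mathbf{1}_V$ is the indicator of $V$. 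Thus Proposition \ref{PRP2.2} amounts to locating, in any $r$-coloring $c$ of $G^N$, such a pair $(V,u)$ for which the orbit $\{u+g\cdot\mathbf{1}_V:g\in G\}$ is monochromatic under $c$.

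Second, I would obtain a monochromatic $p$-parameter combinatorial subspace of $G^N$ whose $p$ variable blocks $J_0,\ldots,J_{p-1}$ all have exactly $p^{p-2}$ positions, using the Shelah-type pigeonhole of Section \ref{Sec3}. From such a subspace with background $b$, the ``affine diagonal'' $v_h=h+g$ ($g\in G$) traces out $p$ words inside the subspace--hence all of the same color--and these are precisely $\{W(g):g\in G\}$ for the $W$ placing $v_h$ on $J_h$ and agreeing with $b$ elsewhere. Since $\sum_h|J_h|=p\cdot p^{p-2}=p^{p-1}$, this $W$ is the desired uniform $G$-variable word.

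The substantive work sits in the second step. Plain Graham--Rothschild produces a monochromatic $p$-parameter subspace but with no handle on the sizes $|J_h|$; the padding observation ``$d$-UHJP $\Rightarrow kd$-UHJP'' alone cannot pin the degree to exactly $p^{p-1}$ unless the raw size $s$ produced is already a divisor of $p^{p-2}$. The Shelah lemma has to be invoked with $p-1$ nested levels of reservoir-building--matching the exponent $p-1$ in $p^{p-1}$--so that the $p$ blocks come out equal and of exactly $p^{p-2}$ positions; each level should contribute a factor of $p$ to the reservoir size, analogous to the way Shelah's induction on the alphabet removes one symbol at a time while multiplying the relevant parameter by $p$. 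Threading the Shelah iteration so that the final block structure is $p$ copies of a block of size $p^{p-2}$, rather than an uncontrolled or merely balanced decomposition, is where I expect essentially all the technical effort to lie.
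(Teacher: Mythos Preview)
Your reformulation in the first step is correct, but the strategy in the second step fails because the intermediate goal is simply false. You ask for a monochromatic $p$-parameter subspace whose $p$ blocks each have size exactly $p^{p-2}$; take $p=2$ and the parity coloring $c(x)=\sum_i x_i\bmod 2$ on $(\mathbb{Z}/2\mathbb{Z})^N$. A $2$-parameter subspace with block sizes $m_0,m_1$ has its four points of parities $c_0,\,c_0+m_0,\,c_0+m_1,\,c_0+m_0+m_1$, and these coincide only when both $m_i$ are even---never when $m_0=m_1=2^{0}=1$. So no Shelah iteration, however nested, can manufacture the subspace you want, because it does not exist. (The Proposition itself survives this coloring: your two affine-diagonal points differ in \emph{both} variable positions and hence share parity. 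The trouble is that you demand the whole $p^p$-point subspace be monochromatic, which is strictly stronger than the $p$-point diagonal and not true at the prescribed block size.) Note also that Lemma~\ref{LM3.1} presupposes an $(E,d)$-UHJP as input; it is a bootstrapping device, not a way to produce structure from nothing.

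The paper never seeks a monochromatic subspace. Following K\v{r}\'\i\v{z}, it proves by induction on $k=1,\ldots,p-1$ that there is a uniform $W$ of degree $p^{k}$ with only the partial set $\{W(\tau^{j}):0\leqslant j\leqslant k\}$ monochromatic. The engine is a short Ramsey lemma (Lemma~\ref{LM5.1}): any $r$-coloring of $\binom{[n]}{p-1}$ with $n$ large contains a $p$-set $P$ with $c(P\setminus\{\min P\})=c(P\setminus\{\max P\})$. Encoding $B=\{n_1<\cdots<n_{p-1}\}$ by the word that places $\tau^{q}$ at slot $n_q$ and $e$ elsewhere, the two truncations $P_\ast,P^\ast$ of a good $P$ become $W(e)$ and $W(\tau)$ for a degree-$p$ uniform word, which gives the base case $k=1$. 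For the step $k\to k+1$ one first invokes Lemma~\ref{LM3.1} with the inductive $(E_k,p^{k})$-UHJP to obtain a long sequence $(W_i)_{i=1}^{n}$ of degree-$p^{k}$ words, runs the $P_\ast/P^\ast$ lemma on the induced $r^{k+1}$-coloring of $\binom{[n]}{p-1}$, and concatenates $W_1^{\tau^{0}},\ldots,W_p^{\tau^{p-1}}$ (with the remaining $W_i$ evaluated at $e$) to obtain a degree-$p^{k+1}$ word whose monochromatic set gains one more power of $\tau$. The exponent $p-1$ in $p^{p-1}$ thus counts the inductive steps, not block-size equalisations.
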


For the proof of the above statement  we will use  a combinatorial argument  due to K\v{r}\'\i\v{z} \cite{Kr91}. We  introduce now  a generalization  of Definition \ref{D1} which is motivated by our second main result.

\begin{defn}\label{D2}  
Let $G$ be a finite group acting on  a finite  set $X$. Also let $H$ be a subgroup of $G$, $E$ be an equivalence relation on $X$ and $d\in \mathbb{N}$. We will say that $(H,X)$  has the  $(E,d)$-UHJP,  if  for  every $r\in\mathbb{N}$ there exists a positive integer $N=N(H,X,E,d,r)$ such that  for every $r$-coloring  of\, $X^N$ there exists a variable word $W\in \mathrm{V}_{\mathrm{un}}^{d}(H;X)$ of length $N$ such that for every $x\in X$ the set $\left\{W(x'): x' E x \right\}$ is monochromatic.
\end{defn}

If $H=X=G$ then  for simplicity we will say that $G$ has the  $(E, d)$-UHJP. If a group $G$ acts on a set $X$ then the restriction of the action to a subgroup  $H$ of $G$ yields an equivalence relation on $X$ with equivalence classes the orbits of $H$ in  $X$. We  will denote this equivalence relation by  $E_{X|H}$, that is
\begin{equation}\label{EQT2.1}
x'E_{X|H}\, x \Leftrightarrow x'\in Hx.
\end{equation}

Notice that under Definition \ref{D2} and \eqref{EQT2.1}, Theorem \ref{T2} states that if  $G$ is a finite solvable group acting on a finite set $X$ and $d$ is a HJ-degree of $G$ then $(G,X)$ has the $(E_{X|G}, d^p)$-UHJP, where  $p$ is the number of the orbits  of $G$ in $X$. One of the basic ingredients of  the proof of theorems \ref{T1} and \ref{T2} is  the next proposition.

\begin{prop}\label{PRP2.4} 
Let $G$ be a finite group acting on  a finite  set $X$ and let $H$ be a subgroup  of $G$.
If $H$ has the $d$-UHJP then $(H, X)$ has the $(E_{X|H},d^p)$-UHJP, where $p$ is the number of the orbits of $H$ in $X$.
\end{prop}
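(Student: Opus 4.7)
I would prove Proposition \ref{PRP2.4} by induction on the number of orbits $p$ of $H$ in $X$, combining the $d$-UHJP of $H$ with a tensor-product construction on variable words.

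Base case $p = 1$: Here $H$ acts transitively on $X$, so fix a basepoint $x_0 \in X$, and given an $r$-coloring $c$ of $X^N$, pull it back to $c^H : H^N \to [r]$ by $c^H(h_1, \ldots, h_N) = c(h_1 x_0, \ldots, h_N x_0)$. Apply the $d$-UHJP of $H$ at $N = N(H, d, r)$ to get $U^H \in \mathrm{V}_{\mathrm{un}}^d(H; H)$ with $\{U^H(h) : h \in H\}$ monochromatic for $c^H$, and convert $U^H$ to $U \in \mathrm{V}_{\mathrm{un}}^d(H; X)$ by replacing each constant $\alpha \in H$ of $U^H$ with $\alpha x_0 \in X$. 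Since $U(hx_0) = U^H(h) \cdot x_0$ componentwise, the set $\{U(hx_0) : h \in H\}$ is monochromatic for $c$, giving the desired property.

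Inductive step ($p \geq 2$): Distinguish an orbit $O_p$ with representative $x_p$, and set $X' = X \setminus O_p$, so $(H, X')$ has $p - 1$ orbits. The word $W$ is constructed as a tensor product $W = U_p \otimes W'$, where $U_p \in \mathrm{V}_{\mathrm{un}}^d(H; X)$ has length $L$ with constants in $O_p$ (obtained via the $d$-UHJP of $H$), and $W' \in \mathrm{V}_{\mathrm{un}}^{d^{p-1}}(H; X')$ has length $M$ with constants in $X'$ (obtained via the inductive hypothesis). The tensor product is defined position-wise: at position $(j, l)$, $(U_p \otimes W')_{(j,l)}$ equals $(U_p)_j$ if $(U_p)_j \in X$, equals $\tilde h \cdot (W')_l$ if $(U_p)_j = v_{\tilde h}$ and $(W')_l \in X'$, and is the variable $v_{\tilde h \tilde h'}$ if $(U_p)_j = v_{\tilde h}$ and $(W')_l = v_{\tilde h'}$. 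A convolution count on $H$ shows $W$ is uniform of length $LM$ and degree $d \cdot d^{p-1} = d^p$. I would take $M$ to be the length from the inductive hypothesis for $(H, X')$ at color count $r$, and $L = N(H, d, r^{|X|^M})$, with $N = LM$.

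Given an $r$-coloring $c$ of $X^N$, one can write $W(y) = \Phi_{U_p}(W'(y))$ for the induced tensor-fill map $\Phi_{U_p} : X^M \to X^N$, so that $c(W(y)) = (c \circ \Phi_{U_p})(W'(y))$. For each orbit $O_i$ with $i < p$, $W'(y) \in (X')^M$ whenever $y \in O_i$ (the constants of $W'$ lie in $X'$ and its variables evaluate in $O_i \subseteq X'$), so monochromaticity of $\{W(hx_i) : h \in H\}$ for $c$ follows from applying the inductive hypothesis to $c \circ \Phi_{U_p}$ restricted to $(X')^M$. For orbit $O_p$, monochromaticity is arranged at the $U_p$ stage by applying the $d$-UHJP of $H$ to an auxiliary coloring $\hat c : H^L \to [r]^{X^M}$ (a product coloring, pulled back via the basepoint $x_p$, whose monochromaticity on $H^L$ translates into constancy of $c(W(hx_p))$ in $h$).

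The main obstacle is the verification for orbit $O_p$: the auxiliary coloring $\hat c$ must be designed so that the $d$-UHJP conclusion, together with the basepoint substitution $U_p^H \mapsto U_p$ and the tensor combination with the yet-unchosen $W'$, indeed yields $c(\Phi_{U_p}(W'(hx_p)))$ constant in $h$. The key technical device is the large color count $r^{|X|^M}$ in the $d$-UHJP application, which implements a standard product-coloring reduction to absorb the dependence on $W'$ and justifies the two-stage construction ``first $U_p$, then $W'$ via the inductive hypothesis''.
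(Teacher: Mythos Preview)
Your base case is exactly the paper's Lemma~\ref{LM6.1}, and the inductive scheme (peel off one orbit at a time, handle the old orbits via the inductive hypothesis and the new one via the $d$-UHJP of $H$) matches Lemma~\ref{LM6.2}. Your treatment of $O_1,\ldots,O_{p-1}$ via the identity $W(y)=\Phi_{U_p}(W'(y))$ and the restriction of $c\circ\Phi_{U_p}$ to $(X')^M$ is correct. The gap is in the handling of $O_p$.

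You want $c(W(hx_p))$ constant in $h$, to be arranged by a product coloring $\hat c\colon H^L\to[r]^{X^M}$. But $W(hx_p)=\Phi_{U_p}(W'(hx_p))$, where $\Phi_{U_p}$ is a fixed map determined by $U_p$ alone: the entire $h$-dependence lives in the \emph{argument} $W'(hx_p)\in X^M$, not in any outer $U_p$-substitution. Thus the $d$-UHJP conclusion ``$\hat c(U_p^H(h))$ is constant in $h$'' cannot be made to say anything about $c(\Phi_{U_p}(W'(hx_p)))$: there is no $h$-independent parameter $z\in X^M$ for which $\hat c(U_p^H(h))(z)$ equals this quantity, since $W'(hx_p)$ itself moves with $h$. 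What you actually need is the coordinatewise insensitivity
\[
c\bigl(U_p(z_1)^\frown\cdots{}^\frown U_p(z_M)\bigr)\ \text{unchanged when any single }z_l\text{ is moved within }O_p,
\]
and a single $U_p$ with a product coloring cannot deliver this without circularity: the contexts to be absorbed are themselves built from $U_p$, so their number is $|X|^{L(M-1)}$, which depends on $L$.

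This is precisely what Shelah's Lemma~\ref{LM3.1} supplies. The paper first runs your base-case argument inside $X$ (Lemma~\ref{LM6.1}) to obtain the $(E_{x_p},d)$-UHJP, then iterates it via Lemma~\ref{LM3.1} to produce a \emph{sequence} $(W_i)_{i=1}^{n}$ (with $n$ the length from the inductive hypothesis) such that $c\bigl(\prod_i W_i(x_i)\bigr)$ is already insensitive to moving any $x_i$ within $O_p$. The inductive hypothesis is then applied to the induced coloring on $X^{n}$, and the final word is assembled by substituting the $W_i$'s into the positions of $W'$ --- your tensor construction, but with distinct $W_i$'s rather than a single repeated $U_p$. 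With that modification your argument goes through; as written, the $O_p$ step does not.
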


If $X=G$ (and the action  is the ordinary  multiplication of $G$)  then   $E_{G|H}$ has as equivalence classes the right cosets of $H$ in $G$.  In this case Proposition \ref{PRP2.4} takes the following form.

\begin{cor}\label{CRL2.5} 
Let $G$ be a  finite group and $H$ be a subgroup of $G$. If  $H$ has the $d$-UHJP  then $(H,G)$  has the $\left(E_{G|H}, d^p\right)$-UHJP, where  $p$ is the index of $H$ in  $G$.
\end{cor}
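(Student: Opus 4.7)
The plan is to deduce this corollary directly from Proposition \ref{PRP2.4} applied to the natural action of $G$ on itself by left multiplication. First, I would verify the correspondence between orbits and cosets: when $G$ acts on $X=G$ via left multiplication, the orbit of any element $g\in G$ under the subgroup $H$ is $Hg=\{hg:h\in H\}$, which is precisely the right coset of $H$ in $G$ containing $g$. Since the right cosets of $H$ partition $G$ into $[G:H]=p$ equal-sized classes, the number of orbits of $H$ on $G$ is exactly $p$. Under the same identification, the equivalence relation $E_{G|H}$ defined in \eqref{EQT2.1} reduces to the usual equivalence by right cosets: $x'E_{G|H}\,x$ if and only if $x'\in Hx$.

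With these observations in place, I would apply Proposition \ref{PRP2.4} with $X=G$ and the multiplication action. The hypothesis that $H$ has the $d$-UHJP is precisely the assumption of the corollary, and the conclusion of Proposition \ref{PRP2.4} states that $(H,G)$ has the $(E_{G|H},d^p)$-UHJP, which is exactly the statement to be proved.

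There is no substantive obstacle here: the corollary is a direct specialization of Proposition \ref{PRP2.4}, and its entire content is the translation from the abstract orbit structure of a group action to the concrete coset structure arising from left multiplication. The only thing to keep in mind is the convention fixed in the paragraph preceding the corollary that, when $X=G$, the unnamed action is the ordinary group multiplication, so that the uniform $H$-variable words over $G$ in $\mathrm{V}_{\mathrm{un}}^{d^p}(H;G)$ are evaluated via $W(g')$ with $v_h$ replaced by the product $hg'$, matching the setup of Proposition \ref{PRP2.4}.
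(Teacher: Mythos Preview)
Your proposal is correct and matches the paper's own treatment exactly: the corollary is stated as the special case $X=G$ of Proposition \ref{PRP2.4}, with the observation that the $H$-orbits in $G$ under left multiplication are the right cosets of $H$, so their number is the index $p=[G:H]$. No further argument is needed.
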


One of the basic properties of the class of solvable groups is that it is  closed under group extensions. Our next proposition says that the same holds for the class of finite groups having the $d$-UHJP for some $d\in\mathbb{N}$.  Recall that if   $H$  and $K$ are groups then  an \emph{extension of $K$ by $H$}  is a group $G$ along with  a surjective homomorphism $\pi: G\to K$ and an injective homomorphism $\iota: H\to G$  such that the image of $\iota$ equals the kernel of $\pi$. In particular, if $H$ is a normal subgroup of $G$ then $G$ is an extension of $G/H$ by $H$ (with $\iota$ the identity map on $H$ and $\pi$ the natural surjective homomorphism $g\to gH$ from $G$ to $G/H$).

\begin{prop}\label{PRP2.6} 
Let $H$ and  $K$ be  finite groups and let  $G$ be  an extension of $K$ by $H$.  If $H$ has the $d_H$-UHJP and $K$ has the $d_K$-UHJP then  $G$ has the $d$-UHJP, where $d=d_H^{|K|} \cdot d_K$.
\end{prop}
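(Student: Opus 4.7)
The plan is to nest an inner $H$-variable structure within an outer $K$-variable structure. We identify $H$ with its image in $G$, so that $H$ is normal in $G$ with $G/H \cong K$, and fix a transversal $T = \{t_\kappa : \kappa \in K\}$ with $t_e = e$; every $g \in G$ then writes uniquely as $g = h t_\kappa$ with $h \in H$ and $\kappa = \pi(g) \in K$. Let $M = N(K, d_K, r)$ be the length furnished by the $d_K$-UHJP of $K$. Using Corollary~\ref{CRL2.5} (which gives $(H,G)$ the $(E_{G|H}, d_H^{|K|})$-UHJP), we pick positive integers $L_M, L_{M-1}, \ldots, L_1$ in this reverse order so that at stage $i$ the corollary yields a length-$L_i$ word for any $r^{|K|^{i-1}|G|^{L_{i+1}+\cdots+L_M}}$-coloring of $G^{L_i}$. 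This recursive choice is legitimate because when $L_i$ is selected, every $L_j$ with $j>i$ is already a fixed number. Set $N = L_1 + \cdots + L_M$ and identify $G^N$ with $G^{L_1}\times\cdots\times G^{L_M}$ split into $M$ consecutive blocks.

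Given a coloring $c : G^N \to [r]$, we process the blocks in the order $i = 1, 2, \ldots, M$. At stage $i$ we apply Corollary~\ref{CRL2.5} to the coloring of $G^{L_i}$ whose color at $\bm{x}$ records the $c$-color for every choice of $K$-labels in the already-processed blocks $1,\ldots,i-1$ and every choice of contents in the blocks $j>i$. This produces a word $W_H^{(i)} \in \mathrm{V}_{\mathrm{un}}^{d_H^{|K|}}(H;G)$ of length $L_i$ with the property that, relative to any such labels and remaining contents, replacing the block-$i$ content $W_H^{(i)}(g')$ by $W_H^{(i)}(g'')$ leaves $c$ unchanged whenever $g',g''$ lie in a common right coset $Hg$. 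After all $M$ stages, the formula $c_M(\bm{\kappa}) = c(W_H^{(1)}(t_{\kappa_1}),\ldots,W_H^{(M)}(t_{\kappa_M}))$ defines an $r$-coloring $c_M : K^M \to [r]$, and the $d_K$-UHJP applied to $c_M$ yields a uniform $K$-variable word $W_K$ of length $M$ and degree $d_K$ with $\{W_K(\kappa_0) : \kappa_0 \in K\}$ monochromatic.

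We then assemble $W$ block by block. If the $i$-th letter of $W_K$ is a constant $\lambda_i \in K$, fill the $i$-th block with the constant word $W_H^{(i)}(t_{\lambda_i}) \in G^{L_i}$; if it is the variable $v_{\kappa_i}$, fill the block with the $G$-variable word obtained from $W_H^{(i)}$ by replacing each variable $v_h$ (with $h \in H$) by $v_{h t_{\kappa_i}}$ and leaving constant letters unchanged. A direct count confirms that $W$ is a uniform $G$-variable word of length $N$ and degree $d_H^{|K|}\cdot d_K$, each $v_g$ (with $g = h t_\kappa$) appearing $(d_K/|K|)(d_H^{|K|}/|H|) = d/|G|$ times. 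Evaluating at $g_0 \in G$ makes block $i$ equal to $W_H^{(i)}(t_{\lambda_i})$ in the constant case and $W_H^{(i)}(t_{\kappa_i} g_0)$ in the variable case; normality of $H$ gives $t_{\kappa_i} g_0 \in H t_{\kappa_i \pi(g_0)}$, so the stage-$i$ property rewrites the block as $W_H^{(i)}(t_{\kappa_i \pi(g_0)})$ without altering $c$. Iterating these rewrites yields $c(W(g_0)) = c_M(W_K(\pi(g_0)))$, which is constant in $g_0$ by the choice of $W_K$.

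The main obstacle is the apparent circularity at the setup stage: the palette size needed for Corollary~\ref{CRL2.5} at stage $i$ depends on the lengths $L_j$ of the later blocks, yet all lengths must be fixed before the coloring of $G^N$ is seen. This is resolved precisely by fixing the $L_i$'s in reverse order $L_M, L_{M-1}, \ldots, L_1$, so that at each step only previously chosen (hence numerical) quantities enter the palette size. One also has to verify that the iterative block rewrites compose correctly: each stage's guarantee requires the already-processed blocks to be in the reduced form $W_H^{(j)}(t_{\kappa_j})$, which they are by the preceding rewrites in the order $1,2,\ldots,M$.
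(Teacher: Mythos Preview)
Your proof is correct and follows essentially the same strategy as the paper: reduce to $(H,G)$ having the $(E_{G|H},d_H^{|K|})$-UHJP via Corollary~\ref{CRL2.5}, iterate this across $M=N(K,d_K,r)$ blocks to make the global color depend only on the cosets $Hg_i$, pass to an induced coloring of $K^M$, apply the $d_K$-UHJP of $K$, and assemble the final word by reindexing the $H$-variables along a transversal. The only difference is organizational: the paper packages the iterated-block step as Lemma~\ref{LM3.1} (Shelah's lemma, which yields full coset-insensitivity in one stroke), whereas you unroll that argument inline, fixing the block lengths in reverse order and doing the rewrites in forward order; this is the mirror image of the paper's induction and gives a slightly weaker stagewise invariant that is nonetheless exactly what the final rewriting needs.
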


Assuming the above propositions we can now give  the proofs of our main results.

\begin{proof}[Proof of Theorem \ref{T1} (assuming propositions \ref{PRP2.2} and \ref{PRP2.6})] 
Let $G$ be a finite solvable group and  $d$ be a HJ-degree of $G$. Let  $\{e\}=G_0\vartriangleleft  G_1 \vartriangleleft\dots \vartriangleleft G_n=G$ be a subnormal series with cyclic factors   such that $d=\prod_{i=1}^n p_i^{(p_i-1)\prod_{j>i} p_j}$, where $p_i=|G_i/G_{i-1}|$ for all $i\in [n]$. We have to show that $G$ has the $d$-UHJP. Let $d_0=1$ and inductively define $d_i=d_{i-1}^{p_i}p_i^{p_i-1}$,  for every $i\in [n]$. As we have already mentioned, $G_0$ has the $d_0$-UHJP. Let $i\geqslant 1$ and assume that  $G_{i-1}$ has the  $d_{i-1}$-UHJP. Since $G_i/G_{i-1}$ is a cyclic group of order $p_i$, by Proposition \ref{PRP2.2}, $G_i/G_{i-1}$ has the $p_i^{p_i-1}$-UHJP. Moreover, it is clear that  $G_i$ is an extension of $G_i/G_{i-1}$  by  $G_{i-1}$ and hence, by Proposition \ref{PRP2.6} and the inductive definition of $d_i$,  $G_i$ has the $d_i$-UHJP. By induction  $G$ has the $d_n$-UHJP. It is now a matter of a simple calculation  to check  that  $d_n=d$.
\end{proof}

\begin{proof}[Proof of Theorem \ref{T2} (assuming Theorem \ref{T1} and Proposition \ref{PRP2.4})]
Let $G$ be  a finite solvable group acting on a finite set $X$ and let $d$ be  a HJ-degree of $G$. By Theorem \ref{T1},  $G$ has the $d$-UHJP. Hence, by  Proposition \ref{PRP2.4}, for $H=G$,  $(G, X)$ has the $(E_{X|G},d^p)$-UHJP, where $p$ is the number of the orbits of $G$ in $X$.  As we have already noticed this is the content of  Theorem \ref{T2}.
\end{proof}

\section{A variant of  Shelah's lemma}\label{Sec3}
For the  proof of propositions \ref{PRP2.2}, \ref{PRP2.4}, and \ref{PRP2.6} we will use a variant of a well-known  lemma due to  Shelah  used in his proof of the Hales--Jewett theorem  \cite{Sh88}  which we state below. We fix for the following   a finite group $G$ acting on a finite  set $X$, a subgroup $H$  of $G$,  an equivalence relation $E$ on $X$ and  $d\in \mathbb{N}$. For every  variable word $W\in \mathrm{V}_{\mathrm{un}}^d(H;X)$,  by $|W|$ we will denote the length of $W$. If $(W_i)_{i=1}^n$ is a sequence in $\mathrm{V}_{\mathrm{un}}^d(H;X)$   and $(x_i)_{i=1}^n\in X^n$   then for every $j\in [n]$, by $\prod_{i=j}^n W_i(x_i)$, we denote  the concatenation ${W_{j}(x_j)}^\smallfrown \dots ^\smallfrown W_n(x_n)$.

\begin{lem}\label{LM3.1} 
If $(H,X)$ has the $(E,d)$-UHJP then for every $n,r\in \mathbb{N}$ there exists a positive integer  $N=N(n,r)$ satisfying the following property. For any  $r$-coloring of $X^N$ there exists a sequence  $(W_i)_{i=1}^n$ in $\mathrm{V}_{\mathrm{un}}^d(H;X)$ with $\sum_{i=1}^n |W_i|=N$  such that for every $(x_i)_{i=1}^n \in X^n$, the set $\left\{\prod_{i=1}^n W_i(x_i'): \ \forall i\in [n] \, x_i' E x_i  \right\}$ is monochromatic.
\end{lem}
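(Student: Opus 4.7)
The plan is to prove Lemma \ref{LM3.1} by induction on $n$, mimicking the classical Shelah-style argument but keeping track of equivalence classes instead of just group elements.

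\textbf{Base case.} For $n=1$, simply take $N(1,r)=N(H,X,E,d,r)$ as provided by the $(E,d)$-UHJP of $(H,X)$. The single word $W_1$ furnished by Definition \ref{D2} is what we need.

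\textbf{Inductive step.} Assume the lemma holds for $n-1$. Let $q$ denote the number of $E$-equivalence classes on $X$, so $q=|X/E|$. Apply the induction hypothesis with $n-1$ and $r^{q}$ colors to obtain an integer $M:=N(n-1,r^{q})$. Then apply the $(E,d)$-UHJP of $(H,X)$ with $R:=r^{|X|^{M}}$ colors to obtain an integer $N_1:=N(H,X,E,d,R)$. Set $N(n,r):=N_1+M$.

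Given any $r$-coloring $c:X^{N_1+M}\to[r]$, first pass to the induced coloring $\widetilde{c}:X^{N_1}\to[r]^{X^{M}}$ defined by $\widetilde{c}(s)(t)=c(s^{\smallfrown}t)$; this uses at most $R$ colors. By the $(E,d)$-UHJP, there exists $W_1\in\mathrm{V}_{\mathrm{un}}^{d}(H;X)$ of length $N_1$ such that for every $x_1\in X$ the set $\{\widetilde{c}(W_1(x_1')):x_1'E\,x_1\}$ is a singleton; in other words, the coloring $c_{[x_1]}:X^{M}\to[r]$ given by $c_{[x_1]}(t)=c(W_1(x_1')^{\smallfrown}t)$ depends only on the $E$-class $[x_1]$ of $x_1$. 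Now form the product coloring
\[
c':X^{M}\to [r]^{X/E},\qquad c'(t)=\bigl(c_{[x_1]}(t)\bigr)_{[x_1]\in X/E},
\]
which uses at most $r^{q}$ colors. By the inductive hypothesis applied to $c'$, there exist $W_2,\ldots,W_n\in\mathrm{V}_{\mathrm{un}}^{d}(H;X)$ with $\sum_{i=2}^{n}|W_i|=M$ such that for every $(x_i)_{i=2}^{n}\in X^{n-1}$, the value $c'\bigl(\prod_{i=2}^{n}W_i(x_i')\bigr)$ is constant as each $x_i'$ ranges over the $E$-class of $x_i$. Reading off the $[x_1]$-coordinate of $c'$, this means that $c\bigl(W_1(x_1')^{\smallfrown}\prod_{i=2}^{n}W_i(x_i')\bigr)=c_{[x_1]}\bigl(\prod_{i=2}^{n}W_i(x_i')\bigr)$ depends only on the $E$-classes $[x_1],\ldots,[x_n]$, which is exactly the required monochromaticity for the sequence $W_1,W_2,\ldots,W_n$.

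\textbf{Main obstacle.} The only delicate point is arranging the two layers of colorings so that the $(E,d)$-UHJP and the inductive hypothesis can both be invoked. The key trick, standard in Shelah-type arguments, is the double use of the pigeonhole structure: first compress the information in the tail $X^{M}$ into a single new color attached to each prefix, then, after producing $W_1$, bundle the $q$ resulting tail-colorings into one product coloring of $X^{M}$ before applying induction. Once this bookkeeping is set up, no further combinatorial work is required, since the conclusion for the concatenation drops out by unwinding the definitions.
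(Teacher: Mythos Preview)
Your proof is correct and follows essentially the same Shelah-type induction as the paper. The only cosmetic differences are that you attach the new single word at the front (as $W_1$) while the paper attaches it at the end (as $W_{n+1}$), and you use $r^{q}$ colors with $q=|X/E|$ for the inductive call where the paper uses the slightly cruder $r^{|X|}$; neither change affects the argument.
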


\begin{proof} 
For every $r\in\mathbb{N}$, let  $f(r)=N(H,X,E,d,r)$, where $N(H,X,E,d,r)$ is  as in  Definition \ref{D2}. It is clear that for $n=1$ we may set $N(1,r)=f(r)$ for all $r\in\mathbb{N}$. Assume that for some $n\in\mathbb{N}$ and all $r\in\mathbb{N}$, the numbers $N(n,r)$ have been defined.  Let $r\in\mathbb{N}$ be arbitrary and set
\begin{equation}\label{EQT2.2}
N(n+1,r)=N \left(n, r^{|X|}\right)+ f\left(r^{|X|^{\displaystyle N \big(n, r^{|X|}\big)}}\right).
\end{equation}

Now let $N=N(n+1,r)$ and let  $c:X^{N}\to [r]$ be an $r$-coloring of $X^N$. By \eqref{EQT2.2}, we have $N=N_1+N_2$ where  $N_1= N \left(n, r^{|X|}\right)$ and   $N_2=f(r^{|X|^{N_1}})$. Let  $\displaystyle c_2:X^{N_2}\to [r^{|X|^{N_1}}]$ defined by $\displaystyle c_2(\bm{y})=\left(c\left(\bm{x}^\smallfrown \bm{y}\right)\right)_{\bm{x}\in X^{N_1}}$, for every $\bm{y}\in X^{N_2}$. By the choice of $N_2$ there exists a variable word $W\in \mathrm{V}_{\mathrm{un}}^d(H;X)$ with $|W|=N_2$ such that
for every $\bm{x}\in X^{N_1}$,
\begin{equation}\label{EQT2.5}
c\left(\bm{x}^\smallfrown W(x')\right)=c\left(\bm{x}^\smallfrown W(x)\right) \text{ if } x' E x.
\end{equation}

Now define $\displaystyle c_1:X^{N_1}\to [r^{|X|}]$ by $\displaystyle c_1(\bm{x})=\left(c\left(\bm{x}^\smallfrown W(x)\right)\right)_{x\in X}$, for every $\bm{x}\in X^{N_1}$. By the choice of $N_1$ there exists a sequence
$(W_i)_{i=1}^n$ in $\mathrm{V}_{\mathrm{un}}^d(H;X)$  with $\sum_{i=1}^n |W_i|=N_1$ such  that   for every $x\in X$,
\begin{equation}\label{ht1}
c\left(\prod_{i=1}^n {W_i(x_i')}^\smallfrown W(x)\right)=c\left(\prod_{i=1}^n {W_i(x_i)}^\smallfrown W(x)\right) \text{ if }  x'_i E x_i  \text{ for all } i\in [n].
\end{equation}

We set $W_{n+1}=W$. Then $(W_i)_{i=1}^{n+1}$ is a sequence in  $\mathrm{V}_{\mathrm{un}}^d(H;X)$  with $\sum_{i=1}^{n+1} |W_i|=N_1+N_2=N$. Also, let   $(x_i)_{i=1}^{n+1}, (x'_i)_{i=1}^{n+1}\in X^{n+1}$ such that $x_i E x_i'$ for all $i\in [n+1]$. Then, $c\left(\prod_{i=1}^{n+1} {W_i(x_i)}\right) \stackrel{\eqref{ht1}}{=}c\left(\prod_{i=1}^n {W_i(x'_i)}^\smallfrown {W(x_{n+1})}\right) \stackrel{\eqref{EQT2.5}}{=} c\left(\prod_{i=1}^{n+1} W_i(x'_i)\right)$ and the proof is completed.
\end{proof}

\section{Some useful notation}\label{Sec4} 
We introduce here  some  notation which will facilitate our   proofs in the next sections. Fix for the following a finite group $G$ acting on a finite set $X$ and a nonempty subset $H$ of $G$.
\subsubsection{} 
Let $(W_i)_{i=1}^n$ be a sequence where each term  $W_i$ is either a constant or a uniform $H$-variable word  over $X$.  For a partition of $[n]=\bigcup_{j=1}^m F_j$, by the notation $ \prod_{j=1}^m \prod_{i\in F_j}W_i$ we  simply mean the concatenation ${W_1}^\smallfrown \dots ^\smallfrown W_n$.

\subsubsection{} 
Let $W=(w_i)_{i=1}^n\in \mathrm{V}_{\mathrm{un}}^d(H;X)$.
The product representation
\begin{equation}\label{EQT3.1}
W=\prod_{i\in F}x_i \times \prod_{h\in H}\prod_{i\in F_h} v_h.
\end{equation}

means that $F=\{i\in [n]: w_i=x_i\in X\}$ and  $F_h=\{i\in [n]: w_i=v_h\}$, for all $h\in H$. By \eqref{EQT1.1}, for any $x\in X$,  $W(x)$ will be represented as
\begin{equation}\label{EQT3.2}
W(x)=\prod_{i\in F}x_i \times \prod_{h\in H}\prod_{i\in F_h} hx.
\end{equation}

Also, for every  $\tau\in G$, by $W^\tau$ we will denote  the  $H\tau$-variable word over $G$  resulting from  $W$ by leaving its  letters unchanged and substituting each  variable $v_h$ by $v_{h\tau }$.  Notice that if $W$ is represented as in \eqref{EQT3.1} then  $W^\tau$ is represented as
\begin{equation}\label{EQT3.3}
  W^\tau=\prod_{i\in F}x_i \times \prod_{h\in H}\prod_{i\in F_h} v_{h \tau }.
\end{equation}

Using \eqref{EQT3.2} and \eqref{EQT3.3},  for every $\tau\in G$ and every  $x\in X$, we have $W^\tau(x)=\prod_{i\in F}x_i \times \prod_{h\in H}\prod_{i\in F_h} (h\tau) x=\prod_{i\in F}x_i \times \prod_{h\in H}\prod_{i\in F_h} h (\tau x)$, and hence,
\begin{equation}\label{EQT3.4}
W^\tau(x)=W(\tau x).
\end{equation}

Moreover notice that  if $H$ is a subgroup of $G$ and $\tau\in H$, then  $W^\tau\in \mathrm{V}_{\mathrm{un}}^d(H ;X)$.

\section{Proof of Proposition \ref{PRP2.2}}\label{Sec5} 
As we have already mentioned in Section \ref{Sec2}, for the proof of Proposition \ref{PRP2.2} we will use  a combinatorial argument  due to K\v{r}\'\i\v{z} \cite{Kr91}. This  argument  requires  a Ramsey-type result which, although is a direct  consequence of Ramsey's theorem \cite{Ra30},  we state it explicitly below  and  we give a self-contained proof.

Let $T:\mathbb{N}\times \mathbb{N}\to \mathbb{N}$ inductively defined by $T(1,r)=1$,  $T(2,r)=r+1$ and $T(n+1,r)=T(n,2^r)$ for every $n\geqslant 2$ and every $r\in\mathbb{N}$. Notice that  $T(n,r)$ is a  tower-type function, for example  $T(3,r)=2^r+1$, $T(4,r)=2^{2^r}+1$, $T(5,r)=2^{2^{2^r}}+1$ and so on. Given a set $X$ and  $m\in\mathbb{N}$, by
$\binom{X}{m}$  we denote the set of all $m$-element subsets  of $X$. Finally, for a nonempty  $X\subseteq \mathbb{N}$, we set 
\[
X_\ast=X\setminus\{\min X\}\ \ \  \text{ and  } \ \ \  X^\ast=X\setminus\{\max X\}.
\]

\begin{lem}\label{LM5.1}
Let $p,r\in\mathbb{N}$ with $p\geqslant 2$ and $n= T(p,r)$. Then for any $r$-coloring of $\binom{[n]}{p-1}$  there exists $P\subseteq [n]$ with $|P|=p$ such that $P_*$ and $P^*$ have the same color.
\end{lem}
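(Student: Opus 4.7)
The plan is to prove the lemma by induction on $p$, with the base case handled by pigeonhole and the inductive step by a standard ``type'' recoloring trick that shifts from $r$ colors on $(p-1)$-element sets to $2^r$ colors on $(p-2)$-element sets. This matches the recursion $T(p+1,r)=T(p,2^r)$ precisely.

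For the base case $p=2$, we have $n=r+1$ singletons and $r$ colors, so by the pigeonhole principle there exist $a<b$ in $[n]$ with $c(\{a\})=c(\{b\})$, and $P=\{a,b\}$ satisfies $P^{\ast}=\{a\}$ and $P_{\ast}=\{b\}$.

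For the inductive step, suppose the statement holds for $p$ with any number of colors, and let $c:\binom{[n]}{p}\to[r]$ with $n=T(p+1,r)=T(p,2^r)$. For every $A=\{a_1<\dots<a_{p-1}\}\in\binom{[n]}{p-1}$ I would define its \emph{type}
\[
\tau(A)=\bigl\{c(A\cup\{y\}):y\in[n],\ y>a_{p-1}\bigr\}\subseteq[r],
\]
obtaining a coloring $\tau$ of $\binom{[n]}{p-1}$ by at most $2^r$ colors (the elements of $\mathcal{P}([r])$). Applying the inductive hypothesis to $\tau$, which is legal since $n=T(p,2^r)$, produces a set $Q=\{q_1<\dots<q_p\}$ with $\tau(Q^{\ast})=\tau(Q_{\ast})$, i.e.\ $\tau(\{q_1,\dots,q_{p-1}\})=\tau(\{q_2,\dots,q_p\})$. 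By definition $c(Q)\in\tau(Q^{\ast})$ (take the witness $y=q_p>q_{p-1}$), hence $c(Q)\in\tau(Q_{\ast})$, so there is some $y>q_p$ with $c(Q_{\ast}\cup\{y\})=c(Q)$. Setting $P=Q\cup\{y\}$ gives $P^{\ast}=Q$ and $P_{\ast}=Q_{\ast}\cup\{y\}$, both of color $c(Q)$.

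The only point that needs care, and which I view as the main (small) obstacle, is justifying the existence of $y>q_p$, i.e.\ that $q_p<n$. This is automatic: if $q_p=n$ then $\tau(Q_{\ast})=\emptyset$ vacuously, forcing $\tau(Q^{\ast})=\emptyset$ as well; but $\tau(Q^{\ast})$ contains the color $c(Q)$, a contradiction. Hence $q_p<n$, and the argument closes. This completes the induction and yields the claimed tower-type bound $T(p,r)$.
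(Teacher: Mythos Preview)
Your proof is correct and is essentially identical to the paper's own argument: induction on $p$, pigeonhole for $p=2$, and for the step recolor $(p-1)$-sets by the set of $c$-colors obtainable by appending a larger element, then apply the inductive hypothesis with $2^r$ colors. The only difference is cosmetic (you swap the roles of the names $P$ and $Q$), and your final paragraph is in fact superfluous: once you have $c(Q)\in\tau(Q_\ast)$, the definition of $\tau(Q_\ast)$ already hands you a $y\in[n]$ with $y>q_p$, so no separate argument that $q_p<n$ is needed.
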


\begin{proof}
If $p=2$ then $T(2,r)=r+1$ and the lemma follows easily  by the pigeonhole principle. We proceed by induction on $p$.  Assume  that the lemma is true for some  $p\geqslant 2$. Let  $q=p+1$, $n=T(q,r)$ and $c:\binom{[n]}{q-1}\to [r]$. We  look for a subset $Q\subseteq [n]$ with $|Q|=q$ and such that $c(Q_*)=c(Q^*)$. Let $\mathcal{P}([r])$ be the powerset of $[r]$ and let  $c':  \binom{[n]}{q-2}\to \mathcal{P}([r])$, defined by
\begin{equation}\label{EQT4.1}
c'(B)=\left\{c(B\cup\{x\}):\ x\in [n] \text{ and } \max B<x\right\},
\end{equation}

for every $B\in \binom{[n]}{q-2}$. We may view $c'$ as a $2^r$-coloring of $\binom{[n]}{q-2}=\binom{[n]}{p-1}$ and hence, since $n=T(q,r)=T(p+1, r)=T(p, 2^r)$, by our inductive assumption, we can find $P\in\binom{[n]}{p}$ such that $c'\left(P_{\ast}\right)=c'\left(P^{\ast}\right)$. Since $P=P^*\cup\max P$, we have $c(P)\in c'\left(P^{\ast}\right)$ and thus, $c(P)\in c'\left(P_{\ast}\right)$. Hence, by \eqref{EQT4.1} for $B=P_{\ast}$, there exists $x\in [n]$ such that
\begin{equation}\label{EQT4.2}
\max P_{\ast} < x \ \text{ and } \  c\left(P_{\ast}\cup\{x\}\right)=c(P).
\end{equation}

We set $Q=P\cup\{x\}$. Since $\max P=\max P_{\ast} < x$ we have that  $|Q|=p+1=q$. Moreover, notice that $c\left(Q_\ast\right)=c\left(P_{\ast}\cup\{x\}\right)\stackrel{\eqref{EQT4.2}}{=}c(P)=c\left(Q^\ast\right)$.
\end{proof}

Let $p\in\mathbb{N}$ with $p\geq 2$ and let $G=\{\tau^{j}:0\leqslant j\leqslant p-1\}$ be a cyclic group of order $p$. Proposition \ref{PRP2.2} is a consequence of the following lemma.

\begin{lem}\label{LM5.2} 
Let $k\in \{1,\dots,p-1\}$. Then for every  $r\in\mathbb{N}$ there exists a positive integer $N$ such that for any  $r$-coloring of $G^N$ there exists a variable word $W\in\mathrm{V}_{\mathrm{un}}^{p^k}(G;G)$ of length $N$ such that the set  $\{W(\tau^{j}):0\leqslant j\leqslant k\}$ is monochromatic.
\end{lem}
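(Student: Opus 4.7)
My plan is to prove Lemma~\ref{LM5.2} by induction on $k\in\{1,\ldots,p-1\}$. A useful observation is that the statement for a given $k$ is equivalent to $(G,G)$ having the $(E_k, p^k)$-UHJP in the sense of Definition~\ref{D2}, where $E_k$ is the equivalence relation on $G$ whose only non-trivial class is $\{e,\tau,\ldots,\tau^k\}$ (all other classes being singletons).

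For the base case $k=1$ I would apply Lemma~\ref{LM5.1} with parameter $p$ directly. Given an $r$-coloring $c$ of $G^N$ with $N=T(p,r)$, associate to each $I=\{i_1<\ldots<i_{p-1}\}\in\binom{[N]}{p-1}$ the constant word $\phi(I)\in G^N$ that has $\tau^j$ at position $i_j$ for $j=1,\ldots,p-1$ and $e$ elsewhere, and apply Lemma~\ref{LM5.1} to $c'(I):=c(\phi(I))$. The resulting $p$-subset $P=\{q_0<\ldots<q_{p-1}\}$ with $c'(P_*)=c'(P^*)$ yields the desired $W$ by placing the variable $v_{\tau^j}$ at $q_j$ (and the constant $e$ elsewhere), since then $W(e)=\phi(P_*)$ and $W(\tau)=\phi(P^*)$ as elements of $G^N$.

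For the inductive step $k\to k+1$ (with $k\ge 1$), I would first apply Shelah's lemma (Lemma~\ref{LM3.1}) to the $(E_k, p^k)$-UHJP with parameter $n:=T(p, r^{k+1})$ and $r$ colors, obtaining a length $N$ and a sequence $W_1,\ldots,W_n\in \mathrm{V}_{\mathrm{un}}^{p^k}(G;G)$ for which $c(\prod_{i} W_i(x_i'))$ is invariant under any change of $(x_i')$ within $E_k$-classes. Next, for each $j\in\{0,1,\ldots,k\}$ and each $I=\{i_1<\ldots<i_{p-1}\}\in\binom{[n]}{p-1}$, I would define
\[
\phi_j(I):=\prod_{i=1}^{n} Y_i^{(j,I)},\qquad Y_{i_l}^{(j,I)}:=W_{i_l}(\tau^{l+j})\ \text{for}\ l=1,\ldots,p-1,\quad Y_i^{(j,I)}:=W_i(e)\ \text{for}\ i\notin I,
\]
set $c_j(I):=c(\phi_j(I))$, and combine these into the $r^{k+1}$-coloring $\tilde{c}(I):=(c_0(I),\ldots,c_k(I))$ of $\binom{[n]}{p-1}$. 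Lemma~\ref{LM5.1} then produces $P=\{q_0<\ldots<q_{p-1}\}\in\binom{[n]}{p}$ with $\tilde{c}(P_*)=\tilde{c}(P^*)$, and I would construct $W:=\prod_{i=1}^{n} Z_i$ by setting $Z_{q_l}:=W_{q_l}^{\tau^l}$ for $l=0,\ldots,p-1$ and $Z_i:=W_i(e)$ for $i\notin P$; a short count of variable occurrences confirms $W\in \mathrm{V}_{\mathrm{un}}^{p^{k+1}}(G;G)$.

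The verification, which I expect to be the trickiest step, relies on the notation $W^\tau$ from Section~\ref{Sec4} and the identity $W_{q_l}^{\tau^l}(\tau^j)=W_{q_l}(\tau^{l+j})$: for each $j\in\{0,\ldots,k\}$ the words $\phi_j(P_*)$ and $W(\tau^j)$ differ only at position $q_0$ (where they read $W_{q_0}(e)$ versus $W_{q_0}(\tau^j)$), and $\phi_j(P^*)$ and $W(\tau^{j+1})$ differ only at $q_{p-1}$ (where they read $W_{q_{p-1}}(e)$ versus $W_{q_{p-1}}(\tau^j)$, using $\tau^p=e$). The critical point is that both discrepancies are absorbed by the Shelah-constancy of the $W_i$, because $\tau^j$ and $e$ lie in the same $E_k$-class $\{e,\tau,\ldots,\tau^k\}$ for every $j\le k$; combining $c(\phi_j(P_*))=c(W(\tau^j))$ and $c(\phi_j(P^*))=c(W(\tau^{j+1}))$ with $c_j(P_*)=c_j(P^*)$ for each $j\in\{0,\ldots,k\}$ gives the chain $c(W(\tau^0))=c(W(\tau^1))=\ldots=c(W(\tau^{k+1}))$. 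The main obstacle I anticipate is to align the shifts $Z_{q_l}=W_{q_l}^{\tau^l}$ with the definition of $\phi_j$ so that all discrepancies land at the two extreme positions $q_0$ and $q_{p-1}$ that the conclusion $\tilde{c}(P_*)=\tilde{c}(P^*)$ of Lemma~\ref{LM5.1} is designed to handle, and that the bound $j\le k$ needed for Shelah-constancy matches exactly the range of $j$ required for the chain to reach $\tau^{k+1}$.
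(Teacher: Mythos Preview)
Your proposal is correct and follows essentially the same approach as the paper: the same base case via Lemma~\ref{LM5.1} with $N=T(p,r)$, the same inductive step combining Shelah's Lemma~\ref{LM3.1} (applied to the $(E_k,p^k)$-UHJP with $n=T(p,r^{k+1})$) with another application of Lemma~\ref{LM5.1} to the product coloring $(c_0,\ldots,c_k)$, and the same construction $Z_{q_l}=W_{q_l}^{\tau^l}$ of the final variable word. Your verification that the only discrepancies occur at the extreme positions $q_0$ and $q_{p-1}$ and are absorbed by $\tau^j\,E_k\,e$ for $j\le k$ is exactly the paper's argument (the paper simplifies notation by assuming $P=[p]$, but this is cosmetic).
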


\begin{proof}
First let  $k=1$. Let $r\in\mathbb{N}$, $N=T(p,r)$ and $c:G^N\to [r]$. Then $c$ induces an $r$-coloring on $\binom{[N]}{p-1}$ as follows. For any $B=\{n_1<\cdots<n_{p-1}\}\in\binom{[N]}{p-1}$, we set $\bm{\tau}^{B}=(\tau_i^B)_{i=1}^N\in G^N$,  where
\begin{equation}\label{EQT4.3}
\tau^{B}_i=\begin{cases}	e &\text{if } i\notin B\\
						\tau^{q} 	&\text{if } i=n_q \text{ for some }  q\in \{1,\dots,p-1\}.
			\end{cases}
\end{equation}

Now let $\tilde{c}:\binom{[N]}{p-1}\to[r]$ defined by $\tilde{c}(B)=c\left(\bm{\tau}^{B}\right)$. Since $N=T(p,r)$,  by  Lemma \ref{LM5.1}, there exists $P\in\binom{[N]}{p}$ such that
\begin{equation}\label{EQT4.4}
\tilde{c}\left(P_\ast\right)=\tilde{c}\left(P^\ast\right).
\end{equation}

Without loss of generality, let  $P=[p]$ and let $W=\prod_{q=1}^pv_{\tau^{q-1}}\times\prod_{i=p+1}^N e$.
Clearly, $W\in \mathrm{V}_{\mathrm{un}}^{p}(G;G)$. Also, since $c\left(W(e)\right)	=c\left(\prod_{q=1}^p\tau^{q-1}\times\prod_{i=p+1}^N e\right)=c\left(\bm{\tau}^{P_{\ast}}\right)=\tilde{c}\left(P_{\ast}\right)$, and $c\left(W(\tau)\right)	=c\left(\prod_{q=1}^p\tau^{q}\times\prod_{i=p+1}^N e\right)=c\left(\bm{\tau}^{P^{\ast}}\right)=\tilde{c}\left(P^{\ast}\right)$, by \eqref{EQT4.4}, we get that $c\left(W(e)\right)=c\left(W(\tau)\right)$ and the proof for $k=1$ is completed.

Assume now that the lemma is true  for some $k\leqslant p-2$. By Definition \ref{D2}, our inductive assumption means that $G$ has the $(E_k,d_k)$-UHJP, where $d_k=p^k$ and $E_k$ is the equivalence relation on $G$ defined by
\[
g E_k g' \Leftrightarrow g,g'\in\{\tau^j: 0\leqslant j \leqslant  k\} \text{ or } g=g'.
\]

Let $r\in\mathbb{N}$ and  let  $N=N(n,r)$ be as in   Lemma \ref{LM3.1}, for $H=X=G$, $E=E_k$,   $d=d_k$ and $n=T(p,r^{k+1})$. Let $c: G^N\to [r]$ be an $r$-coloring of $G^N$.  By the choice of $N$, we can find  a sequence $(W_i)_{i=1}^n$ of variable words in  $\mathrm{V}_{\mathrm{un}}^{p^k}(G;G)$ with $\sum_{i=1}^n |W_i|=N$  such that for any $(g_i)_{i=1}^n \in G^n$ the set $\left\{\prod_{i=1}^n W_i(g_i'): \forall i\in [n] \ g_i' E_k g_i \right\}$ is monochromatic. For every $j\in\{0,\dots,k\}$  and every $B=\{n_1<\cdots<n_{p-1}\}\in\binom{[n]}{p-1}$, let  $\bm{\tau}^{B,j}=\left(\tau^{B,j}_i\right)_{i=1}^n\in G^n$, where
\begin{equation}\label{EQT4.5}
\tau^{B,j}_i=\begin{cases}	e 			&\text{if } i\notin B\\
								\tau^{q+j} 	&\text{if } i=n_q \text{ for some }q\in \{1,\dots,p-1\}.
			\end{cases}
\end{equation}

Now let  $\tilde{c}:\binom{[n]}{p-1}\to[r^{k+1}]$ defined  by $\tilde{c}(B)=\left(\tilde{c}_{j}(B)\right)_{j=0}^{k}$, where
\begin{equation}\label{EQT4.6}
\tilde{c}_{j}(B)=c\left(\prod_{i=1}^n W_i\left(\tau_i^{B,j}\right)\right).
\end{equation}

Since $n=T(p,r^{k+1})$, by Lemma \ref{LM5.1}, there exists $P\in\binom{[n]}{p}$ such that $\tilde{c}\left(P_\ast\right)=\tilde{c}\left(P^\ast\right)$, or equivalently,
\begin{equation}\label{EQT4.7}
\tilde{c}_{j}\left(P_\ast\right)=\tilde{c}_{j}\left(P^\ast\right), \text{ for all } j=0,\dots ,k.
\end{equation}

As in the case $k=1$, let us  assume that  $P=[p]$ and let
\begin{equation}\label{EQT4.8}
W=\prod_{q=1}^p W_q^{\tau^{q-1}}\times \prod_{i=p+1}^n W_i( e)
\end{equation}

where $W_q^{\tau^{q-1}}$ is as in \eqref{EQT3.3} for every  $q\in [p]$. Since $W_q\in \mathrm{V}_{\mathrm{un}}^{p^k}(G;G)$,  we have that $W_q^{\tau^{q-1}}\in \mathrm{V}_{\mathrm{un}}^{p^k}(G;G)$ for every $q\in [p]$ and consequently, $W\in \mathrm{V}_{\mathrm{un}}^{p^{k+1}}(G;G)$. It remains to show that  the set $\left\{W(\tau^j): 0\leqslant  j\leqslant k+1\right\}$ is monochromatic. Indeed, let $j\in\{0,\dots,k\}$. Then,
\[\begin{split}
c\left(W(\tau^{j+1})\right)	&\stackrel{\eqref{EQT3.4}, \eqref{EQT4.8}}{=}c\left(\prod_{q=1}^{p-1} W_q(\tau^{q+j})\times W_p(\tau^{j})\times \prod_{i=p+1}^n W_i( e)\right)\\
							&\stackrel{\tau^{j} E_k\, e}{=}c\left(\prod_{q=1}^{p-1} W_i(\tau^{q+j})\times W_p(e)\times  \prod_{i=p+1}^n W_i( e)\right)\\
							&\stackrel{\eqref{EQT4.5}}{=}c\left(\prod_{i=1}^n W_i\left(\tau^{P^{\ast},j}_i\right)\right)\stackrel{\eqref{EQT4.6}}{=}\tilde{c}_{j}(P^{\ast}).
\end{split}\]

Similarly,
\[\begin{split} c\left(W(\tau^{j})\right)&\stackrel{\eqref{EQT3.4}, \eqref{EQT4.8}}{=}c\left(W_1(\tau^{j})\times \prod_{q=2}^{p} W_q(\tau^{q-1+j})\times \prod_{i=p+1}^n W_i( e)\right)\\&\stackrel{\tau^{j}E_k\, e}{=}c\left(W_1(e)\times\prod_{q=2}^{p} W_q(\tau^{q-1+j})\times \prod_{i=p+1}^n W_i( e)\right)\\&\stackrel{\eqref{EQT4.5}}{=}c\left(\prod_{i=1}^n W_i(\tau^{P_{\ast},{j}}_i)\right)
\stackrel{\eqref{EQT4.6}}{=}\tilde{c}_{j}(P_{\ast}).
\end{split}
\]

By the above and \eqref{EQT4.7}, we get that $c\left(W(\tau^{j+1})\right)=c\left(W(\tau^{j})\right)$, for every $j\in \{0,\dots,k\}$, i.e. the set $\{W(\tau^{j}):0\leqslant j\leqslant k+1\}$ is monochromatic.
\end{proof}

\section{Proof of Proposition \ref{PRP2.4}}\label{Sec6} 
We fix for the following a finite group $G$ acting on a  finite set $X$ and  a subgroup $H$ of $G$ having  the $d$-UHJP for some $d\in\mathbb{N}$.

\begin{lem} \label{LM6.1} 
Let $y\in X$ and let $E_y$  be the  equivalence relation on $X$ defined by
\begin{equation}\label{EQT5.1}
x' E_y x\Leftrightarrow x',x\in H y \text{ or } x'=x.
\end{equation}

Then  $(H,X)$ has the $(E_y, d)$-UHJP.
\end{lem}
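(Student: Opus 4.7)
The plan is to reduce the statement to the hypothesis that $H$ has the $d$-UHJP via a straightforward pull-back through the orbit map $h\mapsto hy$. The key observation is that, by the very definition of $E_y$ in \eqref{EQT5.1}, the $E_y$-classes are the singletons $\{x\}$ for $x\notin Hy$ together with the single non-trivial class $Hy$. Consequently, in order to produce a word $W\in\mathrm{V}_{\mathrm{un}}^d(H;X)$ for which every set $\{W(x'):x'E_y x\}$ is monochromatic, it suffices to arrange that $\{W(hy):h\in H\}$ is monochromatic; all other classes are singletons and the monochromaticity is automatic.

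Concretely, I would set $N=N(H,d,r)$ as in Definition \ref{D1} and, given an $r$-coloring $c:X^N\to[r]$, define the induced coloring $c':H^N\to[r]$ by
\[
c'(h_1,\dots,h_N)=c(h_1 y,\dots,h_N y),
\]
i.e.\ the pull-back along the coordinate-wise orbit map $\psi:H^N\to X^N$, $\psi(\bm{h})=(h_1 y,\dots,h_N y)$. Applying the $d$-UHJP of $H$ to $c'$ yields a word $W'\in\mathrm{V}_{\mathrm{un}}^d(H;H)$ of length $N$ with $\{W'(h):h\in H\}$ monochromatic for $c'$. Write $W'=\prod_{i\in F}h_i\times\prod_{h\in H}\prod_{i\in F_h}v_h$ as in \eqref{EQT3.1} and define
\[
W=\prod_{i\in F}(h_i y)\times\prod_{h\in H}\prod_{i\in F_h}v_h,
\]
which is patently a uniform $H$-variable word over $X$ of length $N$ and degree $d$.

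The next step is the verification that $W(hy)=\psi\bigl(W'(h)\bigr)$ for every $h\in H$; this follows directly from the formula \eqref{EQT3.2} together with the fact that for variable entries $v_{h'}$ both sides evaluate at position $i\in F_{h'}$ to $(h'h)y$, while for constant entries indexed by $F$ both sides equal $h_i y$. Consequently $\{W(hy):h\in H\}=\psi\bigl(\{W'(h):h\in H\}\bigr)$, which is $c$-monochromatic by the choice of $W'$, and the conclusion follows from the description of the $E_y$-classes recorded above.

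I do not expect any real obstacle here: the whole argument is a pull-back through $\psi$, and the only thing to be careful about is that the translation $W'\mapsto W$ preserves uniformity and degree, which is clear since the variable pattern is copied verbatim. The lemma is essentially the observation that the $d$-UHJP for $H$, stated for the regular action on $H$, transfers to any single orbit $Hy\subseteq X$ with no loss in the degree.
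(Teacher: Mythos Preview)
Your proof is correct and follows essentially the same approach as the paper's: both pull back the coloring along the coordinate-wise orbit map $H^N\to X^N$, apply the $d$-UHJP of $H$ to obtain a uniform $H$-variable word over $H$, and then push it forward to $X$ by replacing each constant letter $h_i$ by $h_i y$ while keeping the variable pattern intact. The paper phrases the final verification as a direct computation of $c(W(h_x y))=c_H(W_H(h_x))$, whereas you encapsulate it in the identity $W(hy)=\psi(W'(h))$, but the content is identical.
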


\begin{proof}
Let $r\in\mathbb{N}$  and since $H$ has the $d$-UHJP, let $N=N(H,d,r)$ be as in Definition \ref{D1} (for $H$ in place of $G$). Let  $c: X^N\to [r]$ and $c_H:H^N\to [r]$  defined by
\begin{equation}\label{EQT5.2}
c_H(h_1,\dots, h_N)=c\left(h_1 y,\dots, h_N y\right).
\end{equation}

By the choice of $N$ there exists $W_H\in \mathrm{V}_{\mathrm{un}}^d(H;H)$ of length $N$ such that
\begin{equation}\label{EQT5.3}
c_H\left(W_H(h)\right)=c_H\left(W_H(h')\right) \ \forall h,h'\in H.
\end{equation}

Writing $W_H$ as $W_H=\prod_{i\in F} h_i \times \prod_{h\in H}\prod_{i\in F_h}v_{h}$, we  define
\begin{equation}\label{EQT5.4}
 W=\prod_{i\in F} h_i y \times \prod_{h\in H}\prod_{i\in F_h}v_{h}.
\end{equation}

It is clear that $W\in\mathrm{V}_{\mathrm{un}}^d(H;X)$ and $|W|=N$. Hence, to complete the proof it remains to verify  that the set $\{W(x'): x' E_y x\}$ is monochromatic. Since $E_y$ is the identity relation on $X\setminus Hy$, it suffices to show that $c(W(x))=c(W(x'))$, for every $x,x'\in Hy$.   Indeed, let $x\in  Hy$ and
choose $h_x\in H$ such that $x=h_x y$. Then,
\[
\begin{split} c\left(W(x)\right)= c\left(W(h_xy)\right)&\stackrel{\eqref{EQT5.4}}{=}c\left(\prod_{i\in F} h_i y \times \prod_{h\in H}\prod_{i\in F_h}h(h_x y)\right)\\
&=c\left(\prod_{i\in F} h_i y \times \prod_{h\in H}\prod_{i\in F_h}(h h_x) y\right)\\
&\stackrel{\eqref{EQT5.2}}{=}c_H\left(\prod_{i\in F} h_i \times \prod_{h\in H}\prod_{i\in F_h}hh_x\right)
=c_H\left(W_H(h_x)\right),\end{split}
\]
and thus, by \eqref{EQT5.3}, the set $\{W(x) :x\in Hy\}$ is monochromatic.
\end{proof}

Let  $H y_1,\dots, H y_{p}$ be the orbits of $H$ in $X$ and for every $k\in [p]$, let $E_k$ be the  equivalence relation on $X$ defined by
\begin{equation}\label{EQT5.5}
x' E_k\, x\Leftrightarrow \exists j\in [k] \text{ such that } x',x\in H y_j \text{ or } x'=x.
\end{equation}

Notice that $E_{p}=E_{X|H}$ and hence, Proposition \ref{PRP2.4} follows by  the next lemma.

\begin{lem}\label{LM6.2}
For every $k\in [p]$, $(H,X)$ has the $(E_k, d^k)$-UHJP.
\end{lem}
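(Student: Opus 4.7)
The plan is to prove the statement by induction on $k \in [p]$. For the base case $k = 1$, observe that the equivalence relation $E_1$ from \eqref{EQT5.5} is literally the relation $E_{y_1}$ of Lemma \ref{LM6.1}: both identify elements of the single orbit $Hy_1$ and are the identity elsewhere. Since $H$ has the $d$-UHJP by hypothesis, Lemma \ref{LM6.1} applied with $y = y_1$ immediately yields the $(E_1, d)$-UHJP.

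For the inductive step, assume $(H,X)$ has the $(E_k, d^k)$-UHJP and aim to upgrade to the $(E_{k+1}, d^{k+1})$-UHJP. The key observation is that $E_{k+1}$ refines $E_k$ only by newly identifying elements of the single orbit $Hy_{k+1}$, which sit in singleton $E_k$-classes. So I plan to combine two ingredients: (i) the inductive $(E_k, d^k)$-UHJP, amplified via Shelah's variant (Lemma \ref{LM3.1}) to produce a long sequence of $E_k$-invariant building blocks, and (ii) the hypothesized $d$-UHJP of $H$ itself, applied to an induced coloring of $H^n$ to secure the remaining monochromaticity along $Hy_{k+1}$.

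Concretely, given $r \in \mathbb{N}$, let $n = N(H, d, r)$ witness the $d$-UHJP of $H$, and let $N$ be the integer provided by Lemma \ref{LM3.1} for $n$ blocks and $r$ colors. From a coloring $c$ of $X^N$, Lemma \ref{LM3.1} produces $(W_i)_{i=1}^n$ in $\mathrm{V}_{\mathrm{un}}^{d^k}(H;X)$ with $\sum_i |W_i| = N$ such that $(x_i)_{i=1}^n \mapsto c\bigl(\prod_i W_i(x_i)\bigr)$ is invariant under $E_k$ in each coordinate. Define the auxiliary coloring $c''(h_1,\dots,h_n) = c\bigl(\prod_i W_i(h_i y_{k+1})\bigr)$ of $H^n$, and use the $d$-UHJP of $H$ to extract $W^* \in \mathrm{V}_{\mathrm{un}}^d(H;H)$ of length $n$ whose image $\{W^*(h) : h \in H\}$ is $c''$-monochromatic.

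Finally, I assemble the desired word $W$ of length $N$ block-by-block from $W^*$ and the $W_i$: at a constant position of $W^*$ with letter $h_j \in H$, install the constant word $W_j(h_j y_{k+1})$; at a variable position where $w^*_j = v_{h'}$, install the twisted word $W_j^{h'}$ (Section \ref{Sec4}), which by \eqref{EQT3.4} evaluates at $x$ to $W_j(h' x)$. For $x = h y_{k+1}$ one then has $W(x) = \prod_i W_i(W^*(h)_i \cdot y_{k+1})$, whose $c$-color equals $c''(W^*(h))$, constant in $h$; for $x$ lying in $Hy_j$ with $j \le k$, the arguments fed to each $W_i$ remain in the single orbit $Hy_j$ (for variable blocks) or are fixed (for constant blocks), so the $E_k$-invariance from Lemma \ref{LM3.1} forces $c(W(x))$ to be constant on $Hy_j$; remaining orbits $Hy_j$ with $j > k+1$ impose no constraint since their $E_{k+1}$-classes are singletons. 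The main technical point I expect to handle carefully is verifying that $W$ lies in $\mathrm{V}_{\mathrm{un}}^{d^{k+1}}(H;X)$: length and degree are additive and give $N$ and $d \cdot d^k$ respectively, while the uniformity of $W$ relies on the fact that left multiplication by any $h' \in H$ permutes $H$, so that the variables appearing in the twisted blocks $W_j^{h'}$ redistribute evenly across all $v_h$, $h \in H$.
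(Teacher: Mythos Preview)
Your proof is correct, but it takes the dual route to the paper's. Both arguments induct on $k$, use Lemma~\ref{LM6.1} for the base case, and in the inductive step combine Shelah's amplification (Lemma~\ref{LM3.1}) with a single-orbit UHJP; the difference is which one gets amplified. The paper feeds the $(E_{y_{k+1}},d)$-UHJP (obtained from Lemma~\ref{LM6.1}) into Lemma~\ref{LM3.1} to produce degree-$d$ blocks $(W_i)_{i=1}^n$ that are $E_{y_{k+1}}$-insensitive, and then applies the inductive $(E_k,d^k)$-UHJP to the induced coloring of $X^n$ to obtain an outer word $W'\in\mathrm{V}_{\mathrm{un}}^{d^k}(H;X)$. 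You instead feed the inductive $(E_k,d^k)$-UHJP into Lemma~\ref{LM3.1} to produce degree-$d^k$ blocks that are $E_k$-insensitive, and then apply the $d$-UHJP of $H$ directly to an induced coloring of $H^n$ to obtain an outer word $W^*\in\mathrm{V}_{\mathrm{un}}^{d}(H;H)$. The final assembly step---replacing each outer letter or variable by the corresponding inner block or its twist $W_j^{h'}$---is the same in both, and your degree, uniformity, and monochromaticity checks all go through. Your route is slightly leaner in that Lemma~\ref{LM6.1} is used only at $k=1$ and the outer coloring lives on the smaller space $H^n$; the paper's route has the minor conceptual advantage that every UHJP invoked is of the form $(H,X)$, keeping the inductive step entirely within that framework.
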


\begin{proof}
The case $k=1$ has already been done in Lemma \ref{LM6.1}. We proceed by induction on $k\in [p]$. Let  $k\in [p-1]$  and assume  that  $(H,X)$ has the $(E_k, d^k)$-UHJP. Fix  $r\in\mathbb{N}$ and set  $n= N(H,X, E_k, d^k, r)$.    By Lemma \ref{LM6.1},   $(H,X)$  has the $(E_{y_{k+1}}, d)$-UHJP.  Let $N=N(n,r)$ be as in  Lemma  \ref{LM3.1} (for $E=E_{y_{k+1}}$).

Let $c:X^N\to [r]$. By the choice of $N$,  there exists a  sequence
$(W_i)_{i=1}^{n}$ in $\mathrm{V}_{\mathrm{un}}^d(H;X)$ with $\sum_{i=1}^{n}|W_i|=N$ and such that
\begin{equation}\label{EQT5.6}
c\left(\prod_{i=1}^{n} W_i(x_i')\right)=c\left(\prod_{i=1}^{n} W_i(x_i)\right)  \text{ whenever  } x'_i E_{y_{k+1}}  x_i \   \forall i\in [n].
\end{equation}

Let  $c': X^n\to [r]$ defined  by
\begin{equation}\label{EQT5.7}
c'(x_1,\dots,x_{n}) =c\left(\prod_{i=1}^{n} W_i(x_i)\right).
\end{equation}

By the choice of $n$ and \eqref{EQT5.5},  there exists   a variable word $W'\in\mathrm{V}_{\mathrm{un}}^{d^{k}}(H;X)$ with $|W'|=n$  such that
\begin{equation}\label{EQT5.8}
c'\left(W'(x)\right)=c'\left(W'(y_j)\right), \text{ for  every } j\in [k] \text{  and every } x\in Hy_j.
\end{equation}

Let  $W'=\prod_{i\in F}x_i \times \prod_{h\in H}\prod_{i\in F_h} v_h$, and let
\begin{equation}\label{EQT5.9}
W=\prod_{i\in F}W_i(x_i)\times \prod_{h\in H}\prod_{i\in F_h}W_i^h.
\end{equation}

Since $W_i^h\in \mathrm{V}_{\mathrm{un}}^{d}(H;X)$,  for every  $i\in [n]$  and $|H|\cdot |F_h|=d^k$, for every $h\in H$, it is easy to check that $W$ is a uniform  $H$-variable word over $X$ of  degree $d^{k+1}$. Also, it is clear that  $|W|=\sum_{i=1}^{n}|W_i|=N$. By \eqref{EQT5.5}, to complete the proof it remains to show that for every $j\in [k+1]$, the set $\{W(x): x\in Hy_j\}$ is  monochromatic. First, let  $j=k+1$ and let $x\in Hy_{k+1}$. Then $x=h_x y_{k+1}$, for some $h_x\in H$ and 
\[
\begin{split}
c(W(x))=c\left(W(h_x y_{k+1})\right)& \stackrel{\eqref{EQT5.9}}{=}c\left(\prod_{i\in F}W_i(x_i)\times \prod_{h\in H}\prod_{i\in F_h}W_i\left(h(h_x y_{k+1})\right)\right)\\
&\stackrel{\eqref{EQT5.6}}{=}c\left(\prod_{i\in F}W_i(x_i)\times \prod_{h\in H}\prod_{i\in F_h}W_i\left(hy_{k+1}\right)\right)\\
&\stackrel{\eqref{EQT5.9}}{=}c\left(W(y_{k+1})\right).\end{split}
\]
Also, if $j\in [k]$ and  $x=h_x y_{j}\in Hy_{j}$, then,
\[
\begin{split}
c(W(x))=c\left(W(h_xy_j)\right)&\stackrel{\eqref{EQT5.9}}{=}c\left(\prod_{i\in F}W_i(x_i)\times \prod_{h\in H}\prod_{i\in F_h}W_i\left(h (h_x y_j)\right)\right)\\&\stackrel{\eqref{EQT5.7}}{=}
c'\left(\prod_{i\in F}x_i\times \prod_{h\in H}\prod_{i\in F_h}h (h_x y_j)\right)\\&
=c'\left(W'(h_x y_j)\right)\stackrel{\eqref{EQT5.8}}{=}c'\left(W'(y_j)\right).\end{split}
\] 
By the above the proof is completed.
\end{proof}

\section{Proof of Proposition \ref{PRP2.6}}\label{Sec7}
Let $H$, $G$, $K$ be finite groups such that $G$ is an extension of $K$ by $H$. Let    $\pi: G\to K$ be the a surjective homomorphism and $\iota: H\to G$ be the  injective homomorphism such that the image of $\iota$ equals the kernel of $\pi$. Assume that   $H$  has the $d_H$-UHJP. Identifying $H$ with  $\iota (H)$, by Corollary \ref{CRL2.5}, we have that  $(H,G)$ has the  $(E_{G|H}, d_H^p)$-UHJP, where $p=|G/H|$. Hence, Proposition \ref{PRP2.6} is a direct consequence of the above and the following lemma.

\begin{lem}\label{LM7.1}  
Let $H, G, K$ be finite groups such that there exists a surjective homomorphism  $\pi:G\to K$  with kernel $H$. If   $(H,G)$ has the  $(E_{G|H}, d_1)$-UHJP  for some $d_1\in \mathbb{N}$ and $K$ has the $d_K$-UHJP then $G$ has the  $d$-UHJP, where $d=d_1 d_K$.
\end{lem}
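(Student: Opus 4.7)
The aim is a two-level Shelah-style argument: use Lemma \ref{LM3.1} together with the hypothesis on $(H,G)$ to reduce the coloring of $G^N$ to a coloring of $K^n$, then apply the $d_K$-UHJP of $K$, and finally \emph{lift} the resulting $K$-variable word back to a $G$-variable word of degree $d_1d_K$ by means of a set-theoretic transversal of $H$ in $G$.

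Concretely, first set $n:=N(K,d_K,r)$ from the $d_K$-UHJP of $K$, and then let $N:=N(n,r)$ be the integer produced by Lemma \ref{LM3.1} applied to $(H,G)$ with $E=E_{G|H}$ and $d=d_1$. Given an $r$-coloring $c$ of $G^N$, Lemma \ref{LM3.1} yields a sequence $(W_i)_{i=1}^n$ in $\mathrm{V}_{\mathrm{un}}^{d_1}(H;G)$ with $\sum_{i=1}^n|W_i|=N$ such that $c\bigl(\prod_{i=1}^n W_i(g_i)\bigr)$ depends only on the right $H$-cosets $(Hg_i)_{i=1}^n$. Since $H=\ker\pi$, these cosets are exactly the fibers of $\pi$, so the rule $(k_1,\dots,k_n)\mapsto c\bigl(\prod_{i=1}^n W_i(g_i)\bigr)$, for any choice of lifts $g_i\in\pi^{-1}(k_i)$, defines unambiguously an $r$-coloring $c_K:K^n\to[r]$. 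Applying the $d_K$-UHJP of $K$ to $c_K$ produces $W_K\in\mathrm{V}_{\mathrm{un}}^{d_K}(K;K)$ with $|W_K|=n$ and $\{W_K(k):k\in K\}$ $c_K$-monochromatic. Write $W_K=\prod_{i\in F}k_i\times\prod_{k'\in K}\prod_{i\in F_{k'}}v_{k'}$, fix a transversal $\{t_{k'}:k'\in K\}\subseteq G$ with $\pi(t_{k'})=k'$, and fix arbitrary lifts $g_{i,0}\in\pi^{-1}(k_i)$ for $i\in F$. I will then take as candidate
\[
W:=\prod_{i\in F}W_i(g_{i,0})\times\prod_{k'\in K}\prod_{i\in F_{k'}}W_i^{t_{k'}},
\]
where $W_i^{t_{k'}}$ is as in \eqref{EQT3.3}, namely the word obtained from $W_i$ by replacing each variable $v_h$ ($h\in H$) by $v_{ht_{k'}}$.

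The main obstacle is checking that $W$ really is a \emph{uniform} $G$-variable word of degree $d_1d_K$; this hinges on the fact that $(h,k')\mapsto ht_{k'}$ is a set bijection $H\times K\to G$, so that every $v_g$ ($g\in G$) is represented, and represented in only one ``slot''. Each variable $v_g$ with $g=ht_{k'}$ occurs only inside the blocks $W_i^{t_{k'}}$ for $i\in F_{k'}$, and inside each such block it occurs exactly $d_1/|H|$ times by the uniformity of $W_i$; summing over $i\in F_{k'}$ gives $(d_K/|K|)(d_1/|H|)$ copies of each $v_g$, and multiplying by $|G|=|H||K|$ yields $\deg W=d_1d_K$ and $|W|=N$. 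For monochromaticity, for any $g\in G$ set $k:=\pi(g)$; using \eqref{EQT3.4} we get $W_i^{t_{k'}}(g)=W_i(t_{k'}g)$, and so $W(g)=\prod_{i=1}^n W_i(\tilde g_i)$ where $\tilde g_i=g_{i,0}$ for $i\in F$ and $\tilde g_i=t_{k'}g$ for $i\in F_{k'}$. By construction $\pi(\tilde g_i)$ equals the $i$-th letter of $W_K(k)$, hence $c(W(g))=c_K(W_K(k))$, which is independent of $k\in K$ by the choice of $W_K$. This yields that $\{W(g):g\in G\}$ is $c$-monochromatic and completes the argument.
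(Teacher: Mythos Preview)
Your proof is correct and follows essentially the same route as the paper: set $n=N(K,d_K,r)$, apply Lemma \ref{LM3.1} to get the sequence $(W_i)_{i=1}^n$, push the coloring down to $K^n$ via $\pi$, apply the $d_K$-UHJP of $K$ to obtain $W_K$, and then lift $W_K$ back to $G$ by replacing each variable block $v_{k'}$ by $W_i^{t_{k'}}$ for a fixed transversal $\{t_{k'}\}$. The only cosmetic differences are that the paper reuses the transversal elements $g_{\kappa_i}$ for the constant letters (whereas you allow arbitrary lifts $g_{i,0}$) and is a bit terser in the uniformity check; your explicit use of the bijection $(h,k')\mapsto ht_{k'}$ to verify that $W\in\mathrm{V}_{\mathrm{un}}^{d_1d_K}(G;G)$ is the same computation the paper does implicitly.
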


\begin{proof} 
Notice that  $H$ is a normal subgroup of $G$ and
\begin{equation}\label{EQT6.1}
g  E_{G|H} g' \Leftrightarrow \pi(g)=\pi(g').
\end{equation}

Let $r\in\mathbb{N}$ and let $N=N(n,r)$ be as in Lemma \ref{LM3.1}, for $X=G$, $E=E_{G|H}$,  $d=d_1$  and  $n=N(K,d_K,r)$. Let $c:G^N\to [r]$. By the choice of $N$ and \eqref{EQT6.1},  there exists a sequence $(W_i)_{i=1}^{n}$ of variable words in $\mathrm{V}_{\mathrm{un}}^{d_1}(H;G)$, with $\sum_{i=1}^{n} |W_i|=N$, and such that
\begin{equation}\label{EQT6.2}
c\left(\prod_{i=1}^{n} W_i(g_i)\right)=c\left(\prod_{i=1}^{n} W_i(g'_i)\right) \text{ whenever } \pi(g_i)=\pi(g_i') \, \forall \, i\in[n].
\end{equation}

Let $c_K:K^{n}\to [r]$ defined by
\begin{equation}\label{EQT6.3}
c_K(\kappa_1,\dots,\kappa_{n})=c\left(\prod_{i=1}^{n} W_i(g_i)\right) \text{ if }  \kappa_i=\pi(g_i) \, \forall \, i\in[n].
\end{equation}

By \eqref{EQT6.2} and since $\pi:G\to K$ is onto,  the coloring $c_K$ is well-defined. By the choice of  $n$,  there exists a  variable word $W_K\in\mathrm{V}_{\mathrm{un}}^{d_K}(K;K)$ of length $n$ and   such that
\begin{equation}\label{EQT6.4}
c_K\left(W_K(\kappa)\right)=c_K\left(W_K(\kappa')\right)\, \forall \kappa, \kappa'\in K.
\end{equation}

Let $W_K=\prod_{i\in F}\kappa_i \times \prod_{\kappa\in K}\prod_{i\in F_\kappa} v_\kappa$ and set
\begin{equation}\label{EQT6.5}
W=\prod_{i\in F}W_i(g_{\kappa_i}) \times \prod_{\kappa\in K}\prod_{i\in F_\kappa}W_i^{g_\kappa},
\end{equation}

where for every $\kappa\in K$, $g_\kappa\in G$ is such that $\pi(g_\kappa)=\kappa$. It is easy to see  that $|W|=\sum_{i=1}^{n} |W_i|=N$. Moreover,  $\bigcup_{\kappa\in K} Hg_\kappa=G$ and since for every $\kappa\in K$,    $|K|\cdot |F_\kappa|=d_K$  and $W_i^{g_\kappa}\in \mathrm{V}_{\mathrm{un}}^{d_1}(H g_\kappa;G)$, we conclude that $W\in \mathrm{V}_{\mathrm{un}}^{d}(G;G)$, for $d=d_1 d_K$. Finally, let  $g\in G$ and let $\kappa_g=\pi(g)$. Then,
\[
\begin{split}c\left(W(g)\right) & \stackrel{\eqref{EQT6.5}}{=}c\left(\prod_{i\in F}W_i(g_{\kappa_i}) \times \prod_{\kappa\in K}\prod_{i\in F_\kappa}W_i\left(g_\kappa g\right)\right)
\\&\stackrel{ \eqref{EQT6.3}}{=}c_K\left(\prod_{i\in F}\kappa_i\times \prod_{\kappa\in K}\prod_{i\in F_\kappa}\pi\left(g_\kappa g\right) \right)
\\&=c_K\left(\prod_{i\in F}\kappa_i\times \prod_{\kappa\in K}\prod_{i\in F_\kappa}\pi(g_\kappa)\pi(g)\right)
\\&=c_K\left(\prod_{i\in F}\kappa_i\times \prod_{\kappa\in K}\prod_{i\in F_\kappa}\kappa \kappa_g\right)=c_K\left(W_K(\kappa_g)\right),
\end{split}
\]
and hence, by \eqref{EQT6.4}, the set  $\{W(g): g\in G\}$ is monochromatic.
\end{proof}

\section{Notes and remarks}\label{Sec8}
The class of finite groups having the $d$-UHJP for some $d\in\mathbb{N}$ shares similar properties with the class of solvable groups, namely, by propositions \ref{PRP2.2} and  \ref{PRP2.6}, it contains all finite  cyclic groups and it  is closed under  extensions. It is well known that the class of solvable groups is also closed under subgroups and quotients. The next proposition says that the same holds for the finite groups with the $d$-UHJP.

\begin{prop}\label{PRP8.1}
Let  $G$ be a finite group having   the $d$-UHJP for some $d\in\mathbb{N}$. Then  the following are satisfied. (a) If $H$ is a subgroup of $G$  then $H$ has the $d$-UHJP.  (b) If $H$ is a normal subgroup of $G$ then $G/H$ has the $d$-UHJP.
\end{prop}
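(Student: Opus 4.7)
My plan is to prove both parts by ``pushing down'' the uniform $G$-variable word provided by the $d$-UHJP of $G$ to one over the target group, using a suitable surjection $G\to H$ or $G\to G/H$ in place of the substitution letters.

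For part (b), the natural surjection is the quotient homomorphism $\pi\colon G\to G/H$. Given $r\in\mathbb{N}$, I would take $N=N(G,d,r)$ from Definition \ref{D1} and, starting from an $r$-coloring $c\colon(G/H)^N\to[r]$, pull it back to $\tilde c\colon G^N\to[r]$ by $\tilde c(g_1,\ldots,g_N)=c(\pi(g_1),\ldots,\pi(g_N))$. Applying the $d$-UHJP of $G$ produces a uniform $G$-variable word $W=\prod_{i\in F}g_i\times\prod_{g\in G}\prod_{i\in F_g}v_g$ with $\{W(g):g\in G\}$ monochromatic under $\tilde c$. The descended word $W_K$ is obtained by replacing each constant $g_i$ by $\pi(g_i)$ and each variable $v_g$ by $v_{\pi(g)}$. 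Because every fiber of $\pi$ has exactly $|H|$ elements and $W$ is uniform, each $v_\kappa$ occurs the same number $|H|\cdot|F_g|$ of times in $W_K$, so $W_K$ is uniform of degree $d$. Using that $\pi$ is a homomorphism, a direct computation gives $W_K(\kappa)=\pi(W(g_\kappa))$ componentwise for any lift $g_\kappa\in\pi^{-1}(\kappa)$, so $c(W_K(\kappa))=\tilde c(W(g_\kappa))$ is independent of $\kappa$.

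For part (a) there is no homomorphism $G\to H$ in general, so I would replace $\pi$ by a set-theoretic retraction built from a transversal. Fix a left transversal $S$ of $H$ in $G$, so that each $g\in G$ admits a unique decomposition $g=sh$ with $s\in S$, $h\in H$, and define $\psi\colon G\to H$ by $\psi(sh)=h$. The key observation is that $\psi$ restricts to the identity on $H$ and is \emph{right $H$-equivariant}: writing $g=s\psi(g)$, for any $h\in H$ we have $gh=s(\psi(g)h)$ and hence $\psi(gh)=\psi(g)h$. Since the evaluation rule for $W(h)$ substitutes each variable $v_{g'}$ by the product $g'h$ with the argument $h$ on the right, this equivariance is exactly what allows the pushdown of part (b) to go through verbatim with $\psi$ in place of $\pi$. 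The resulting word $W'$, with constants $\psi(g_i)\in H$ and variables $v_{\psi(g)}$, satisfies $W'(h)=\psi(W(h))$ componentwise for every $h\in H$. Uniformity of degree $d$ follows since each fiber $\psi^{-1}(h)$ contains exactly $[G:H]$ elements, and the monochromaticity of $\{W'(h):h\in H\}$ is immediate from the monochromaticity of $\{\tilde c(W(g)):g\in G\}$.

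The only real obstacle is the conceptual one in part (a): recognizing that a homomorphism onto $H$ is \emph{not} needed. A set-theoretic retraction coming from a left transversal is compatible enough with the right-multiplicative action governing $W(g)$ to carry the pushdown through, after which both parts reduce to the same short bookkeeping verification.
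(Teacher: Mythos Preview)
Your proposal is correct and follows essentially the same approach as the paper: for (a) the paper likewise fixes left coset representatives $\tau_0=e,\tau_1,\dots,\tau_{p-1}$, defines the retraction $\varphi(g)=h_g$ via the unique decomposition $g=\tau_{i_g}h_g$, records the same three properties (retraction onto $H$, right $H$-equivariance $\varphi(gh)=\varphi(g)h$, and equal fiber size $p$), and then performs exactly your pushdown of the variable word; for (b) it simply notes that the argument goes through with the quotient map $g\mapsto gH$ in place of $\varphi$. The only cosmetic difference is that the paper treats (a) first and deduces (b) by the same template, whereas you present them in the opposite order.
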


\begin{proof} (a) Let  $H$  be a subgroup of $G$. Let $p$ be the  index of $H$ in $G$ and  choose  $\tau_1,\dots,\tau_{p-1}\in G$ such that $G/H=\{ H, \tau_1 H, \dots, \tau_{p-1}H\}$. Setting $\tau_0=e$,  notice that  for every $g\in G$ there exists a unique pair $(i_g, h_g)\in \{0,\dots,p-1\}\times H$ such that $g=\tau_{i_g} h_g$. Let $\varphi:G\to H$  defined by  $\varphi(g)=h_g$ for all $g\in G$. It is easy to see that   $\varphi$ satisfies the following properties. (i) It  is surjective  and $\varphi(h)=h$ for all $h\in H$, (ii) for every  $g\in G$ and every  $h\in H$, $\varphi(gh)=\varphi(g) h$, and  (iii) for every  $h\in H$, $|\{g\in G: \varphi(g)=h\}|=p$.

Now fix $r\in\mathbb{N}$ and let $c: H^N\to [r]$, where $N=N(G,d,r)$ is as in Definition \ref{D1}. Let $\tilde{c}:G^N\to [r]$ defined by  $\tilde{c}(g_1,\dots,g_N)=c\left(\varphi(g_1),\ldots,\varphi(g_N)\right)$. By the choice of $N$, we  can find  $\tilde{W}\in \mathrm{V}_{\mathrm{un}}^d(G;G)$ of length $N$ and $k_0\in [r]$  such that $\tilde{c}\left(\tilde{W}(g)\right)=k_0$ for all $g\in G$. Let $\tilde{W}=\prod_{i\in F} g_i \times \prod_{g\in G}\prod_{i\in F_g} v_g$ and set $W=\prod_{i\in F} h_i \times \prod_{h\in H}\prod_{i\in F'_h} v_h$, where $h_i=\varphi(g_i)$, for all $i\in F$ and  $F'_h=\cup\{F_g: \varphi(g)=h\}$ for all $h\in H$.  Since $\tilde{W}\in\mathrm{V}_{\mathrm{un}}^d(G;G)$, by (iii)   we conclude  that $W\in \mathrm{V}_{\mathrm{un}}^d(H;H)$.
It remains to show that  $\{W(h):h\in H\}$ is $c$-monochromatic. Indeed, let  $h\in H$. Then,
$k_0=\tilde{c}\left(\tilde{W}(h)\right)=
c\left(\prod_{i\in F} \varphi(g_i) \times \prod_{g\in G}\prod_{i\in F_g} \varphi(g h)\right)\stackrel{(ii)}{=}
c\left(\prod_{i\in F} h_i \times \prod_{g\in G}\prod_{i\in F_g} \varphi(g) h\right)=c\left(W(h)\right)$,
and the proof  is  completed. (b) The proof is similar to the above by  using the surjective homomorphism  $g\to gH$ from $G$ to $G/H$ in place of $\varphi$.
\end{proof}

A natural question is  whether there exists a non-solvable group having the $d$-UHJP for some $d\in\mathbb{N}$. The  first candidate groups here are  the alternating  group $A_5$ or  the symmetric group $S_5$. A more general question is the following.

\begin{question}
\emph{Let $G$ be a finite group. Suppose that $G$ contains two subgroups $H$ and $K$ such that $H\cap K=\{e\}$ and $G=HK=\{hk:h\in H \text{ and } k\in K\}$. If $H$ has the $d_H$-UHJP and $K$ has the $d_K$-UHJP, is it true that $G$ must have the $d$-UHJP for some $d\in\mathbb{N}$}?
\end{question}

Notice that, if we additionally assume that $H$ is normal in $G$, then $G$ is an extension of $K$ by $H$ and the conclusion holds by Proposition \ref{PRP2.6}. However, if we drop the assumption of normality, then  the proof of Proposition \ref{PRP2.6} (see  Lemma \ref{LM7.1}) cannot be carried out.

An affirmative answer to the above question would have as a consequence that every finite group has the $d$-UHJP for some $d\in\mathbb{N}$. To see this, notice that by Proposition \ref{PRP8.1} and Cayley's theorem, it is enough to show that every $S_n$ has the $d_n$-UHJP for some $d_n\in\mathbb{N}$. Indeed,  $S_{n}=H_n C_n$, where $H_n$ is the set of all permutations on  $[n]$  which stabilize a fixed $i\in [n]$ and $C_n$ is the cyclic group  generated by the cycle $(1\, 2\,\dots n)$. By Proposition \ref{PRP2.2}, $C_n$ has the $n^{n-1}$-UHJP and clearly $H_n$ is isomorphic to $S_{n-1}$. By induction the conclusion follows.

The second author would like to thank the organization of ``Ramsey DocCourse Prague 2016'', which took place at Charles University in Prague, and during which, among others, he made his first contact with the area of Euclidean Ramsey theory and motivated him to investigate the problems presented in this work.

\end{document}